
\documentclass[a4paper,11pt]{amsart}


\usepackage{amsmath,amssymb,amsfonts,latexsym}


\newtheorem{thm}{Theorem}[section]

\newtheorem{lem}[thm]{Lemma}
\newtheorem{prop}[thm]{Proposition}
\newtheorem{defn}[thm]{Definition}
\newtheorem{rem}[thm]{\bf{Remark}}

\newtheorem{Pro}[thm]{\bf{Problem}}



\begin{document}

\title[On the isomorphism problem for central extensions I]{On the isomorphism problem for central extensions I}

\author{Noureddine Snanou}
\address{Department of Mathematics, Faculty of Sciences Dhar El Mahraz, Sidi Mohamed Ben Abdellah University, Fez, Morocco}
\email{noureddine.snanou@usmba.ac.ma}


\begin{abstract}
Let $G_{2}$ be a group which acts trivially on an abelian group $G_{1}$. As is well known, each perturbed direct product of $G_{1}$ and $G_{2}$ under a 2-cocycle $\varepsilon\in Z^{2}(G_{2},G_{1})$ determines a central extension of $G_{1}$ by $G_{2}$. The purpose of this paper is to study perturbed direct products of groups and to decide in some cases how 
the isomorphism of these groups can be decided. Furthermore, we show that the study of the isomorphism of perturbed direct products of an abelian torsion group and a finite group is 
reduced to the study of the isomorphism of $p$-subgroups. We characterize such isomorphisms in various situations with some assumptions on the quotient group.

\vspace{2mm}

\noindent\textsc{2010 Mathematics Subject Classification.} 20J05, 20E22, 20J06, 20D40.
\vspace{2mm}

\noindent\textsc{Keywords and phrases.} central extension, upper isomorphic, $\mathcal{A}$-isomorphic, $c$-isomorphic.

\end{abstract}


\maketitle


\section{Introduction}


Deciding the isomorphism of two given groups or even classifying all groups in a certain class is one of the most classical and challenging problems in group theory.
The classification of finite simple groups is the first step of the H\"{o}lder program which gives us a complete list of finite simple groups \cite{As04}. A group $G$
that is not simple can be broken into two smaller groups, namely a nontrivial normal subgroup $G_{1}$ (the kernel group) and the corresponding quotient group $G_{2}\cong G/G_{1}$.
This is equivalent to say that $G$ is an extension of $G_{1}$ by $G_{2}$. In particular, if $G_{1}$ is a central subgroup of $G$, then we say that $G$ is a central extension of $G_{1}$
by $G_{2}$. The question of what groups $G$ are extensions of $G_{1}$ by $G_{2}$ is called the extension problem and this is the second step of the H\"{o}lder program. The solution
 to the extension problem would give us a complete classification of all finite groups. But, it is not easy to solve this problem, and no general theory exists which characterizes
 all possible extensions at one time. However, for group extensions with abelian kernel, an answer to the extension problem has been given by H\"{o}lder and Schreier by using the
 group cohomology, but it has some considerable disadvantages \cite[Theorem 7.34]{ROT95}. In fact, this answer does not allow us to
compute the number of non-isomorphic extensions of $G_{1}$ by $G_{2}$ (the isomorphism problem). Very recently, we study in \cite{SCC22} the isomorphism problem for split extensions.
In \cite{S-C20, Sn20}, we characterize the isomorphism problem for non-split abelian extensions. In fact, most of the results of those studies do not concern general isomorphisms,
but only those of certain type, namely leaving one of the two factors or even both invariant. The aim of this paper is to give a further contribution to this topic. More precisely, we complete the work with the isomorphism problem for central extension in other special cases. We mainly deal with isomorphisms inducing the identity or a commuting automorphism on the quotient group. Further, we show that the study of the isomorphism of central extensions of an abelian torsion group by a finite group is reduced to the study of the isomorphism of $p$-subgroups. In this direction, we characterize such isomorphisms in various situations with some assumptions on the quotient group.

Let $G$ be a group. As usual we denote by $Z(G)$, $G'$ and $Aut(G)$, respectively, the center, the derived subgroup, and the automorphism group of $G$. If  $G$ is finite, then $\pi(G)$ denotes the set of prime divisors of the order of $G$. 


\section{Preliminaries and Properties}


Let $G_{2}$ be a group which acts trivially on an abelian group $G_{1}$. A normalized 2-cocycle of $G_{2}$ with coefficients in $G_{1}$ is a map $%
\varepsilon :G_{2}\times G_{2}\rightarrow G_{1}$ satisfying the following two conditions:

\begin{eqnarray}\label{normlized}
\varepsilon(g,1)&=&\varepsilon(1,g)=1 \text{ \ for all \ } g\in G_{2}.\\
\label{Cocycle}
\varepsilon(h,g)\varepsilon (hg,k)&=&\varepsilon
(g,k)\varepsilon (h,gk) \text{ \ for all \ } g, h, k\in G_{2}.
\end{eqnarray}
The condition given by the equation \eqref{normlized} is called the normalization condition, and the condition given by \eqref{Cocycle} is referred to as the 2-cocycle condition. The set of normalized 2-cocycles of $G_{2}$ with coefficients in $G_{1}$ is an abelian group and denoted by $Z^{2}(G_{2},G_{1})$. The trivial 2-cocycle is the 2-cocycle $c$ with $c(g,h)=1$ for all $g,h\in G_{2}$. Note that the elements of $Z^{2}(G_{2},G_{1})$ are known by factor sets in many books, (see for example \cite{Bro82, Mac63, ROT95, W94}). The set of all normalized 2-cocycles which are symmetric forms a
subgroup of $Z^{2}(G_{2},G_{1})$ and denoted by $SZ^{2}(G_{2},G_{1})$. A $2$-coboundary of $G_{2}$ with coefficients in $G_{1}$ is a map $\psi :G_{2}\times G_{2}\rightarrow G_{1}$ satisfying that
for all $y,y'\in G_{2}:\psi (y,y')=\eta( y) \eta ( yy') ^{-1}\eta( y') $ for some $\eta
:G_{2}\rightarrow G_{1}$. The set of 2-coboundaries of $G_{2}$ with coefficients in $G_{1}$ is a subgroup of $ Z^{2}(G_{2},G_{1})$ and denoted by $B^{2}( G_{2},G_{1})$. The corresponding factor group $H^{2}(G_{2},G_{1})=Z^{2}(G_{2},G_{1})/B^{2}(G_{2},G_{1})$ is called
the second cohomology group of $G_{2}$ with coefficients in $G_{1}$. The elements of
$H^{2}(G_{2},G_{1})$ are called cohomology classes. The cohomology class of $\varepsilon\in Z^{2}(G_{2},G_{1})$ is denoted by $[\varepsilon]$. Two normalized 2-cocycles are said to be cohomologous if they lie in the same cohomology class.

Let $1\rightarrow G_{1}\overset{i}{\rightarrow }G\overset%
{j}{\rightarrow }G_{2}\rightarrow 1$ be a short exact sequence of groups, i.e., an extension of a group $G_{1}$ by the group $G_{2}\cong G/G_{1}$. If $G_{1}$ is a central subgroup of $G$, then such an extension is called a central extension of $G_{1}$ by $G_{2}$. We refer to $G_{1}$ as the kernel group, and $G_{2}$ as the quotient group for the extension. The Schreier's theorem says that the central extensions of $G_{1}$ by $G_{2}$ are classified by the
non-trivial elements of the second cohomology group $H^{2}(G_{2},G_{1})$ with coefficients in $G_{1}$ \cite[Theorem 7.34]{ROT95}. Split extensions correspond to the trivial equivalence class of $H^{2}(G_{2},G_{1})$.

Let $G_{2}$ be a group which acts trivially on an abelian group $G_{1}$. It is well known that each perturbed direct product of $G_{1}$ and $G_{2}$ under a 2-cocycle $\varepsilon$ determines a central extension of $G_{1}$ by $G_{2}$. Let $\varepsilon \in Z^{2}(G_{2},G_{1})$, the perturbed direct product of $G_{1}$ and $G_{2}$ under $\varepsilon$ is defined as the group $G_{1}\underset{\varepsilon }{\times }G_{2}$ with underlying set $G_{1}\times
G_{2}$ and operation given by
\begin{equation*}
(x,y) \underset{\varepsilon}{\cdot }(x',y')=(xx'\varepsilon(y,y'),yy')
\end{equation*}
for all $x$, $x'\in G_{1}$ and $y$, $y'\in G_{2}$. The converse is also true, then each central extension $G$ of $G_{1}$ by $G_{2}$ is isomorphic to a perturbed direct product of $G_{1}$ and $G_{2}$, namely $G\cong G_{1}\underset{\varepsilon }{\times }G_{2}$ for some $\varepsilon \in Z^{2}(G_{2},G_{1})$  \cite[Proposition 2.3]{S-C20}. We can easily see that the perturbed direct product $G_{1}%
\underset{\varepsilon}{\times }G_{2}$ is abelian if and only if $%
G_{2}$ is abelian and $\varepsilon
\in SZ^{2}(G_{2},G_{1})$. In particular, we have $G_{1}\underset{\varepsilon}{\times }G_{2}= G_{1}\times G_{2}$ if and only if $\varepsilon$ is the trivial 2-cocycle. But, it is possible for a direct product to be isomorphic to a perturbed direct product as Remark \ref{Product} shows. In particular, suppose that $G_{1}$ and $G_{2}$  are abelian groups and $\varepsilon$ is non-symmetric. So $G_{1}\underset{\varepsilon}{\times }G_{2}$ is non-abelian and then $G_{1}\underset{\varepsilon}{\times }G_{2}\ncong G_{1}\times G_{2}$. Furthermore, we have

\begin{prop}
Let $G_{1}$ be a finite abelian group and $G_{2}$ a finite group and let $\varepsilon \in
Z^{2}(G_{2},G_{1})$. If $SZ^{2}(G_{2},G_{1})=\{1\}$, then $G_{1}\underset{\varepsilon}{\times }G_{2}\cong G_{1}\times G_{2}$ if and only if $\varepsilon=1$.
\end{prop}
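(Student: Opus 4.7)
The backward implication is immediate from the definition of the perturbed direct product: with $\varepsilon$ the trivial $2$-cocycle, the operation $(x,y)\underset{\varepsilon}{\cdot}(x',y')=(xx'\varepsilon(y,y'),yy')$ collapses to the ordinary direct product operation, as already noted in the paragraph preceding the proposition.

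For the forward implication, write $E:=G_1\underset{\varepsilon}{\times}G_2$ and assume $E\cong G_1\times G_2$ as groups. My strategy is to show that $\varepsilon$ is symmetric; the hypothesis $SZ^2(G_2,G_1)=\{1\}$ then forces $\varepsilon=1$ at once.

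The first step is to prove $E'\cap G_1=\{1\}$. The canonical projection $\pi:E\to G_2$, $(x,y)\mapsto y$, restricts to a surjection $E'\twoheadrightarrow G_2'$ whose kernel is $E'\cap(G_1\times\{1\})$. Since $G_1$ is abelian, $(G_1\times G_2)'=\{1\}\times G_2'$, and the assumed isomorphism forces $|E'|=|G_2'|$; hence the kernel is trivial. The second step is a direct commutator computation: using $(1,y)(1,y')=(\varepsilon(y,y'),yy')$ and the inverse formula $(b,z)^{-1}=(b^{-1}\varepsilon(z,z^{-1})^{-1},z^{-1})$, both immediate consequences of the definition and the normalization condition, one checks that for any $y,y'\in G_2$ with $yy'=y'y$,
\[
[(1,y),(1,y')]=\bigl(\varepsilon(y,y')\varepsilon(y',y)^{-1},\,1\bigr).
\]
By the first step this element lies in $E'\cap G_1=\{1\}$, so $\varepsilon(y,y')=\varepsilon(y',y)$ whenever $yy'=y'y$.

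If $G_2$ is abelian every pair commutes, which gives $\varepsilon\in SZ^2(G_2,G_1)=\{1\}$ and finishes the proof. The main obstacle lies in the non-abelian case, where the above only controls $\varepsilon$ on commuting pairs. To propagate symmetry to the remaining pairs I would exploit the considerable rigidity of the hypothesis $SZ^2(G_2,G_1)=\{1\}$---which forces every normalized conjugation-invariant map $\eta:G_2\to G_1$ to be a group homomorphism---either by showing that non-trivial instances of this hypothesis reduce to the abelian $G_2$ case (or to $G_1=\{1\}$), or by constructing from the asymmetric part of $\varepsilon$ a symmetric element of $Z^2(G_2,G_1)$ that by hypothesis must vanish.
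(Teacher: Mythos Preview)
Your proof has a genuine gap that you yourself identify: the non-abelian case of $G_2$ is left unfinished, and the closing paragraph offers only a sketch of possible strategies rather than an argument. The restriction to \emph{commuting} pairs $y,y'$ is precisely what creates the obstacle, and it is unnecessary.

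The paper's route removes this restriction at the outset. Using the $2$-cocycle identity (not merely the normalization condition) one finds that for \emph{all} $y,y'\in G_2$,
\[
[(x,y),(x',y')]=\bigl(\varepsilon(y,y')\varepsilon(y',y)^{-1},\,[y,y']\bigr).
\]
From this the paper reads off that $E'\cong H_\varepsilon\underset{\varepsilon'}{\times}G_2'$, where $H_\varepsilon=\langle \varepsilon(y,y')\varepsilon(y',y)^{-1}:y,y'\in G_2\rangle$ and $\varepsilon'=\varepsilon|_{G_2'\times G_2'}$. Now argue by contrapositive: if $\varepsilon\neq 1$ then, since $SZ^2(G_2,G_1)=\{1\}$, the cocycle $\varepsilon$ is not symmetric, so $H_\varepsilon\neq 1$ and $|E'|=|H_\varepsilon|\cdot|G_2'|>|G_2'|=|(G_1\times G_2)'|$, whence $E\not\cong G_1\times G_2$.

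Note that your Step~1 (that $E'\cap G_1=1$) is the same cardinality comparison the paper uses, just phrased differently; what you are missing is that the full commutator formula forces $H_\varepsilon\subseteq E'\cap G_1$ (equivalently, pushes $|E'|$ above $|G_2'|$) whenever $\varepsilon$ is asymmetric on \emph{any} pair, commuting or not. Once you have the general formula, there is no separate non-abelian case to handle and your speculative final paragraph can be dropped.
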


\begin{proof}
The if direction is clear. Conversely, suppose that $\varepsilon\neq 1 $, by using the 2-cocycle condition, we get $[(x,y),(x',y')]=(\varepsilon(y,y')\varepsilon(y',y)^{-1}, [y,y'])$ for all $(x,y)$, $(x',y')\in G_{1}\underset{\varepsilon}{\times }G_{2}$. So, $(G_{1}\underset{\varepsilon}{\times }G_{2})'\cong H_{\varepsilon}\underset{\varepsilon'}{\times }G'_{2}$ such that $\varepsilon'=res_{G_{2}'\times G_{2}'}(\varepsilon) $ and $H_{\varepsilon}$ is generated by the elements of the form $\varepsilon(y,y')\varepsilon(y,y')^{-1}$ where $y$, $y'\in G_{2}$. By assumption, we have $H_{\varepsilon}$ is a nontrivial subgroup of $G_{1}$. But, $(G_{1}\times G_{2})'=G_{2}'$ which implies that $G_{1}\underset{\varepsilon}{\times }G_{2}\ncong G_{1}\times G_{2}$, as required.
\end{proof}

\begin{rem}\label{Product}Let $\varepsilon\in Z^{2}(G_{2},G_{1})$ and $G=G_{1}\underset{\varepsilon}{\times }G_{2}$ be a finite group. Under some conditions on $G_{1}$ and $G_{2}$, the group $G$ can also be decomposed as a direct product of $G_{1}$ and $G_{2}$. Indeed, by \cite[Proposition 2.1.7]{Kar87}, $G'\cap G_{1}$ is isomorphic to a subgroup of the Schur multiplier $M(G/G_{1})$ of $G/G_{1}$. So if $|G_{1}|$ and $|M(G_{2})|$ are coprime, then $G'\cap G_{1}=1$. Further, if $G_{2}$ is perfect, then so is $G/G_{1}$. Hence $G/G_{1}= G'G_{1}/G_{1}$, which implies that $G= G'G_{1}$. Thus, $G= G'\times G_{1}$ and then $G\cong G_{1}\times G_{2}$.
\end{rem}

Now, in view of the preceding discussion, the following problem seems natural.

\begin{Pro}
Find necessary and sufficient conditions on $\varepsilon_{1}$ and $\varepsilon_{2}$ under which the central extensions $G_{1}\underset%
{\varepsilon_{1}}{\times }G_{2}$ and $G_{1}\underset%
{\varepsilon_{2}}{\times }G_{2}$ are isomorphic.
\end{Pro}

To begin, let $\varepsilon_{1}$, $\varepsilon_{2}\in Z^{2}(G_{2},G_{1})$ and $\varphi$ a group homomorphism from $G_{1}\underset%
{\varepsilon_{1}}{\times }G_{2}$ to $G_{1}\underset{\varepsilon_{2}}{\times }G_{2}$. Let $pr_{i}:G_{1}\underset{\varepsilon_{2}}{\times }G_{2}\rightarrow G_{i}$ be the $%
ith$ canonical projection and $t_{i}:G_{i}\rightarrow G_{1}\underset{\varepsilon_{1}}{\times }G_{2}$ be the $ith$ canonical injection. Set $\varphi
_{ij}=pr_{i}\circ \varphi \circ t_{j}$, where $1\leq i,j\leq 2$. So we can write $\varphi $ in the matrix form: $\varphi =\left(
\begin{array}{cc}
\varphi _{11} & \varphi _{12} \\
\varphi _{21} & \varphi _{22}%
\end{array}%
\right) $. Furthermore, we have the following lemma which we need in the sequel.

\begin{lem}\cite[Lemma 3.1]{S-C20}
\label{class1} Let $\varphi=\left(
\begin{array}{cc}
\varphi _{11} & \varphi _{12} \\
\varphi _{21} & \varphi _{22}%
\end{array}%
\right) $ be a group homomorphism from $G_{1}\underset%
{\varepsilon_{1}}{\times }G_{2}$ to $G_{1}\underset{\varepsilon_{2}}{\times }G_{2}$. Then
\begin{eqnarray}\label{Hom}
\varphi(x,y)=(\varphi_{11}(x)\varphi_{12}( y) \varepsilon _{2}( \varphi
_{21}( x),\varphi_{22}( y)),\text{ }\varphi _{21}(x) \varphi_{22}( y))
\end{eqnarray}
for all $x\in G_{1}$, and $y\in G_{2}$.
\end{lem}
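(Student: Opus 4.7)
The plan is to decompose a general element $(x,y)$ of $G_{1}\underset{\varepsilon_{1}}{\times}G_{2}$ as a product of an element of the form $(x,1)$ with one of the form $(1,y)$, apply $\varphi$ using the fact that it is a homomorphism, and then carry out the multiplication in $G_{1}\underset{\varepsilon_{2}}{\times}G_{2}$ using the formula from the definition of the perturbed direct product.

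First I would observe that, by the normalization condition \eqref{normlized} applied to $\varepsilon_{1}$, the product rule gives
\begin{equation*}
(x,1)\underset{\varepsilon_{1}}{\cdot}(1,y) = (x\cdot 1\cdot \varepsilon_{1}(1,y),\, 1\cdot y) = (x,y),
\end{equation*}
so $(x,y)=(x,1)\underset{\varepsilon_{1}}{\cdot}(1,y)$ for every $x\in G_{1}$ and $y\in G_{2}$. Applying $\varphi$ and using that it is a group homomorphism, one gets $\varphi(x,y)=\varphi(x,1)\underset{\varepsilon_{2}}{\cdot}\varphi(1,y)$. Next, by unwinding the definitions of $t_{j}$ and $pr_{i}$, one has $\varphi(x,1)=(pr_{1}\varphi t_{1}(x),\, pr_{2}\varphi t_{1}(x))=(\varphi_{11}(x),\varphi_{21}(x))$ and similarly $\varphi(1,y)=(\varphi_{12}(y),\varphi_{22}(y))$. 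Substituting these into the operation of $G_{1}\underset{\varepsilon_{2}}{\times}G_{2}$ yields
\begin{equation*}
\varphi(x,y)=(\varphi_{11}(x)\varphi_{12}(y)\varepsilon_{2}(\varphi_{21}(x),\varphi_{22}(y)),\; \varphi_{21}(x)\varphi_{22}(y)),
\end{equation*}
which is exactly \eqref{Hom}.

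There is no genuine obstacle: the only subtlety is to remember that the $\varphi_{ij}$ need not be group homomorphisms (the injections $t_{j}$ are only set maps in general, since, e.g., $(1,y)\underset{\varepsilon_{1}}{\cdot}(1,y')=(\varepsilon_{1}(y,y'),yy')$), so the formula must be obtained from the single multiplicative decomposition of $(x,y)$ rather than from any additive splitting of $\varphi$ itself. Once that point is kept in mind, the proof reduces to the two-line computation above, and normalization of $\varepsilon_{1}$ is the one small input that makes the decomposition $(x,y)=(x,1)\underset{\varepsilon_{1}}{\cdot}(1,y)$ hold on the nose.
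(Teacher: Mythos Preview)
Your argument is correct and is exactly the natural proof: factor $(x,y)=(x,1)\underset{\varepsilon_{1}}{\cdot}(1,y)$ via the normalization condition, apply the homomorphism $\varphi$, read off $\varphi(x,1)$ and $\varphi(1,y)$ from the definitions of the $\varphi_{ij}$, and multiply in $G_{1}\underset{\varepsilon_{2}}{\times}G_{2}$. The paper does not reproduce a proof here but cites \cite[Lemma~3.1]{S-C20}, and your computation is the standard one that reference records; there is nothing to add or correct.
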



\section{Isomorphisms inducing the identity on the quotient group}


\begin{defn}
The perturbed direct products $G_{1}\underset{\varepsilon _{1}}{\times }G_{2}$ and $%
G_{1}\underset{\varepsilon _{2}}{\times }G_{2}$ are called
$G_{2}$-isomorphic if there exists an isomorphism $\varphi=\left(
\begin{array}{cc}
\varphi _{11} & \varphi _{12} \\
\varphi _{21} & \varphi _{22}%
\end{array}%
\right) $  between them such that $\varphi_{22}=id_{G_{2}}$.
\end{defn}

In the following, we give an interesting result for a special class of non-nilpotent quotient groups, namely for those that have trivial center.

\begin{prop}\label{centerless}
   Let $G_{2}$ be a centerless group which acts trivially on an abelian group $G_{1}$. The perturbed direct products $G_{1}\underset{\varepsilon_{1}}{\times }G_{2}$
   and $G_{1}\underset{\varepsilon_{2}}{\times }G_{2}$ are $G_{2}$-isomorphic if and only if there exists $\sigma \in Aut(G_{1})$ such that
$(\sigma\circ\varepsilon_{1})\varepsilon_{2}^{-1} \in B^{2}( G_{2},G_{1})$.
\end{prop}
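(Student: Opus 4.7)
The plan is to apply Lemma \ref{class1} to any prospective $G_{2}$-isomorphism and to extract the constraints forced by $Z(G_{2})=1$.

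\emph{Forward direction.} Suppose $\varphi$ is a $G_{2}$-isomorphism, so $\varphi_{22}=id_{G_{2}}$. Substituting into \eqref{Hom} and comparing the second coordinates in the equality $\varphi((x,y)\underset{\varepsilon_{1}}{\cdot}(x',y'))=\varphi(x,y)\underset{\varepsilon_{2}}{\cdot}\varphi(x',y')$ yields
\begin{equation*}
\varphi_{21}\bigl(xx'\varepsilon_{1}(y,y')\bigr)\,yy' \;=\; \varphi_{21}(x)\,y\,\varphi_{21}(x')\,y'.
\end{equation*}
Specialising to $y=y'=1$ shows that $\varphi_{21}\colon G_{1}\rightarrow G_{2}$ is a group homomorphism, and then specialising to $x=1$ and $y'=1$ gives $\varphi_{21}(x')=y\varphi_{21}(x')y^{-1}$ for all $x'\in G_{1}$ and $y\in G_{2}$. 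Hence $\varphi_{21}(G_{1})\subseteq Z(G_{2})=1$. This is precisely where the centerless hypothesis is consumed, and it is the main obstacle of the argument; the rest is routine bookkeeping.

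Once $\varphi_{21}\equiv 1$, formula \eqref{Hom} collapses to $\varphi(x,y)=(\varphi_{11}(x)\varphi_{12}(y),y)$. Comparing first coordinates in the case $y=y'=1$ shows that $\sigma:=\varphi_{11}$ is an endomorphism of $G_{1}$; the general case, after cancelling $\sigma(x)\sigma(x')$ and using that $G_{1}$ is abelian, becomes
\begin{equation*}
(\sigma\circ\varepsilon_{1})(y,y')\,\varepsilon_{2}(y,y')^{-1}
=\varphi_{12}(y)\,\varphi_{12}(yy')^{-1}\,\varphi_{12}(y').
\end{equation*}
Since $\varphi(1,1)=(1,1)$ forces $\varphi_{12}(1)=1$, the right-hand side matches the definition of a 2-coboundary recalled in Section 2, so $(\sigma\circ\varepsilon_{1})\varepsilon_{2}^{-1}\in B^{2}(G_{2},G_{1})$. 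Finally, the explicit form of $\varphi$ shows that $\varphi$ is bijective if and only if $\sigma$ is, so $\sigma\in Aut(G_{1})$.

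\emph{Converse.} Given $\sigma\in Aut(G_{1})$ and $\eta\colon G_{2}\rightarrow G_{1}$ realising $(\sigma\circ\varepsilon_{1})\varepsilon_{2}^{-1}$ as a coboundary, I would define $\varphi(x,y):=(\sigma(x)\eta(y),y)$. Evaluating the coboundary relation at $(1,1)$, together with normalisation of the two 2-cocycles, forces $\eta(1)=1$; the homomorphism property is then exactly the coboundary identity above read backwards, and bijectivity is immediate from invertibility of $\sigma$ via the inverse $(a,b)\mapsto(\sigma^{-1}(a\eta(b)^{-1}),b)$. By construction $\varphi_{22}=id_{G_{2}}$, producing the desired $G_{2}$-isomorphism.
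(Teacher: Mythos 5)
Your proof is correct and follows essentially the same route as the paper's: both arguments hinge on showing that $\varphi_{21}$ must land in $Z(G_{2})=1$ and then reading off the coboundary condition from the first coordinate, with the same explicit map $\varphi(x,y)=(\sigma(x)\eta(y),y)$ for the converse. The only difference is cosmetic: where the paper invokes \cite[Theorem 3.7]{Sn20} after establishing $\varphi_{21}=1$, you carry out that computation directly, which makes the argument self-contained but does not change its substance.
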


\begin{proof}
 Suppose that the perturbed direct products $G_{1}\underset{\varepsilon_{1}}{\times }G_{2}$ and $G_{1}\underset{\varepsilon_{2}}{\times }G_{2}$ are isomorphic by an isomorphism
 $\varphi=\left(
     \begin{array}{cc}
     \varphi _{11} & \varphi _{12} \\
     \varphi _{21} & id_{G_{2}}%
     \end{array}%
     \right) $. Since $pr_{2}$ and $t_{1}$ are group homomorphisms, so is $\varphi _{21}$. Furthermore, we see that $\varphi (x,1) \underset{\varepsilon
_{2}}{\bullet }\varphi ( 1,y)=\varphi (1,y) \underset{\varepsilon
_{2}}{\bullet }\varphi ( x,1)$. So by applying formula \eqref{Hom}, we get $\varphi _{21}(x)y=y\varphi _{21}(x)$ for all $x\in G_{1}$, and $y\in G_{2}$. Thus, we have $\varphi _{21}\in Hom(G_{1},Z(G_{2}))$. Since $G_{2}$ is centerless, it follows that $\varphi _{21}=1$. Hence, by \cite[Theorem 3.7]{Sn20}, there exists $\sigma=\varphi _{11} \in Aut(G_{1})$ such that $(\sigma\circ\varepsilon_{1})\varepsilon_{2}^{-1} \in B^{2}( G_{2},G_{1})$. Conversely, since $(\sigma\circ\varepsilon_{1})\varepsilon_{2}^{-1} \in B^{2}( G_{2},G_{1})$, it follows that there exists a map $\eta:G_{2}\rightarrow G_{1}$ such that $((\sigma\circ\varepsilon_{1})\varepsilon_{2}^{-1})(y,y')=\eta(y)\eta(y')\eta(yy')^{-1}$ for all $y$, $y'\in G_{2}$. By the normalization condition, we have $\eta(1)=1$. So, the bijection $\varphi$ defined by $\varphi(x,y)=(\sigma(x)\eta(y),\text{ }y)$ is clearly an isomorphism. As required.
\end{proof}

Let $G_{1}$ be an abelian torsion group, i.e. all elements of $G_{1}$ are of finite order. Then $G_{1}$ is a restricted direct product of all $p$-components $G_{1p}$, where $p$ runs through the set of prime numbers. Let $G_{2}$ be a finite group which acts trivially on $G_{1}$ and $\varepsilon\in Z^{2}(G_{2},G_{1})$. Let $\pi(G_{2})=\{p_{1}, p_{2}, \ldots, p_{k}\}$ and $G_{2i}$ be a Sylow $p_{i}$-subgroup of $G_{2}$ for each $1\leq i\leq k$. Clearly, we have $\varepsilon_{i}=res_{G_{2i}}(\varepsilon)\in Z^{2}(G_{2i},G_{1})$. So, in view of the previous preposition, we get the following result.

\begin{thm}\label{centerthm}
Keep the preceding notations and assumptions and suppose that $G_{2}$ is centerless. The perturbed direct products $G_{1}\underset{\varepsilon_{1}}{\times }G_{2}$ and $G_{1}\underset{\varepsilon_{2}}{\times }G_{2}$ are $G_{2}$-isomorphic
if and only if $G_{1p_{i}}\underset{\varepsilon_{1i}}{\times }G_{2i}$ and $G_{1p_{i}}\underset{\varepsilon_{2i}}{\times }G_{2i}$ are $G_{2i}$-isomorphic for all $1\leq i\leq k$.
\end{thm}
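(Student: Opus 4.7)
The plan is to reduce the condition of Proposition \ref{centerless} to its analogues at the Sylow subgroups of $G_2$ by exploiting the $p$-primary decomposition of $G_1$ together with a standard restriction-to-Sylow argument on $H^2$. Since every element of $G_1$ has finite order, $G_1 = \bigoplus_p G_{1p}$ and each $G_{1p}$ is characteristic in $G_1$; consequently every $\sigma \in Aut(G_1)$ splits as $\bigoplus_p \sigma_p$ with $\sigma_p \in Aut(G_{1p})$, and every $\varepsilon \in Z^2(G_2, G_1)$ splits as $\prod_p \varepsilon^{(p)}$ with $\varepsilon^{(p)} \in Z^2(G_2, G_{1p})$ (the product has finite support since $G_2$ is finite). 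All of this is compatible with the restriction from $G_2$ to $G_{2i}$ and the projection from $G_1$ onto $G_{1p_i}$.

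For the forward direction, Proposition \ref{centerless} produces $\sigma \in Aut(G_1)$ and $\eta : G_2 \to G_1$ with $(\sigma \circ \varepsilon_1)\varepsilon_2^{-1}(y,y') = \eta(y)\eta(yy')^{-1}\eta(y')$. Restricting $\sigma$ to $G_{1p_i}$ gives $\sigma_i \in Aut(G_{1p_i})$, and projecting $\eta|_{G_{2i}}$ onto $G_{1p_i}$ gives $\eta_i : G_{2i} \to G_{1p_i}$ satisfying the analogous coboundary relation on the $p_i$-component. The map $\varphi(x,y) = (\sigma_i(x)\eta_i(y),\, y)$ is then the required $G_{2i}$-isomorphism $G_{1p_i} \underset{\varepsilon_{1i}}{\times} G_{2i} \to G_{1p_i} \underset{\varepsilon_{2i}}{\times} G_{2i}$, exactly as in the converse of Proposition \ref{centerless}; the non-centerlessness of $G_{2i}$ is no obstacle here, since we only need to exhibit one isomorphism rather than classify them all.

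For the converse, given $\sigma_i \in Aut(G_{1p_i})$ and $\eta_i : G_{2i} \to G_{1p_i}$ at each Sylow subgroup, I would define $\sigma \in Aut(G_1)$ by setting $\sigma|_{G_{1p_i}} = \sigma_i$ for $p_i \in \pi(G_2)$ and $\sigma|_{G_{1p}} = id_{G_{1p}}$ for $p \notin \pi(G_2)$, and aim to prove that $\alpha := (\sigma \circ \varepsilon_1)\varepsilon_2^{-1} \in B^2(G_2, G_1)$, whereupon Proposition \ref{centerless} closes the argument. Decompose $\alpha = \prod_p \alpha^{(p)}$: for $p \notin \pi(G_2)$, multiplication by $|G_2|$ is invertible on $G_{1p}$ yet annihilates $H^2(G_2, G_{1p})$, so $H^2(G_2, G_{1p}) = 0$ and $\alpha^{(p)} \in B^2$ automatically; for $p = p_i$, by construction $res_{G_{2i}}\alpha^{(p_i)}$ is the coboundary of $\eta_i$, and since $H^2(G_2, G_{1p_i})$ is $p_i$-primary (as $G_{1p_i}$ is) while $[G_2 : G_{2i}]$ is coprime to $p_i$, a standard restriction-corestriction argument makes $res : H^2(G_2, G_{1p_i}) \to H^2(G_{2i}, G_{1p_i})$ injective, forcing $\alpha^{(p_i)} \in B^2(G_2, G_{1p_i})$. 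Assembling the local primitives produces $\mu : G_2 \to G_1$ realizing $\alpha$ as a coboundary. The hardest part is precisely this Sylow-local-to-global step, which requires carefully reconciling restriction to Sylow subgroups of $G_2$ with projection onto $p$-components of $G_1$: in particular, the notation $\varepsilon_{ji}$ must be implicitly identified with its $p_i$-component in $Z^2(G_{2i}, G_{1p_i})$, which is legitimate because the other components lie in $Z^2(G_{2i}, G_{1p})$ with $H^2 = 0$.
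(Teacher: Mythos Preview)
Your approach matches the paper's: both directions reduce to Proposition~\ref{centerless} and then transfer between $G_2$ and its Sylow subgroups via restriction/corestriction on $H^2$; you are simply more explicit than the paper about the $p$-primary decomposition of $G_1$, $Z^2$, and $H^2$, and about the identification of $\varepsilon_{ji}$ with its $p_i$-component (which the paper leaves tacit).

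There is, however, a gap in the converse direction---one that the paper's own proof shares. You begin ``given $\sigma_i \in Aut(G_{1p_i})$ and $\eta_i : G_{2i} \to G_{1p_i}$ at each Sylow subgroup'' without saying how these are produced from the hypothesis that $G_{1p_i} \underset{\varepsilon_{1i}}{\times} G_{2i}$ and $G_{1p_i} \underset{\varepsilon_{2i}}{\times} G_{2i}$ are $G_{2i}$-isomorphic. The paper at the corresponding point invokes Proposition~\ref{centerless}, but that proposition requires the quotient group to be centerless, and a nontrivial $p$-group $G_{2i}$ never is. Concretely, a $G_{2i}$-isomorphism $\varphi$ may well have $\varphi_{21} \in Hom(G_{1p_i}, Z(G_{2i}))$ nonzero, and then $\varphi_{11}$ need not lie in $Aut(G_{1p_i})$: for example, on $\mathbb{Z}/p \times \mathbb{Z}/p$ with trivial cocycles, $(x,y) \mapsto (y, x+y)$ has $\varphi_{22} = id$ but $\varphi_{11} = 0$. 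So the passage from a $G_{2i}$-isomorphism to a pair $(\sigma_i, \eta_i)$ with $\sigma_i$ an automorphism is not justified by anything proved so far; it would require either a separate argument or a version of Proposition~\ref{centerless} valid without the centerless hypothesis.
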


\begin{proof}Suppose that the groups $G_{1}\underset{\varepsilon_{1}}{\times }G_{2}$ and $G_{1}\underset{\varepsilon_{2}}{\times }G_{2}$ are $G_{2}$-isomorphic. By  Proposition \ref{centerless}, there exists $\sigma \in Aut(G_{1})$ such that $(\sigma\circ\varepsilon_{1})\varepsilon_{2}^{-1} \in B^{2}( G_{2},G_{1})$. So $res_{G_{2i}}((\sigma\circ\varepsilon_{1})\varepsilon_{2}^{-1})\in B^{2}(G_{2i},G_{1})$. Since $G_{2i}$ is a $p_{i}$-group and $G_{1}$ is torsion, by \cite[Lemma 1.5]{K93}, it follows that $H^{2}(G_{2i},G_{1})=H^{2}(G_{2i},G_{1p_{i}})$. Further, we have $\sigma_{i}=res_{G_{1p_{i}}}(\varphi_{11})\in Aut(G_{1p_{i}})$. So $(\sigma_{i}\circ\varepsilon_{1i})\varepsilon_{2i}^{-1}\in B^{2}(G_{2i},G_{1p_{i}})$, which implies necessity. Conversely, suppose that $G_{1p_{i}}\underset{\varepsilon_{1i}}{\times }G_{2i}$ and $G_{1p_{i}}\underset{\varepsilon_{2i}}{\times }G_{2i}$ are $G_{2i}$-isomorphic for all $1\leq i\leq k$. So by Proposition \ref{centerless}, there exists $\sigma_{i}\in Aut(G_{1p_{i}})$ such that $(\sigma_{i}\circ\varepsilon_{1i})\varepsilon_{2i}^{-1}\in B^{2}(G_{2i},G_{1p_{i}})$ for each $1\leq i\leq k$. So $\sigma=(\sigma_{i})_{1\leq i\leq k}\in Aut(G_{1})$ and then $res_{G_{2i}}([(\sigma\circ\varepsilon_{1})\varepsilon_{2}^{-1}])=1$ in $H^{2}(G_{2i},G_{1p_{i}})$. Apply the corestriction map $cores_{G_{2i}}:H^{2}(G_{2i},G_{1p_{i}})\rightarrow H^{2}(G_{2},G_{1p_{i}})$. Then by using \cite[Corollary 2.4.9]{Wei69}, we get  $[(\sigma\circ\varepsilon_{1})\varepsilon_{2}^{-1}]^{|G_{2}:G_{2i}|}=1$ for all $1\leq i\leq k$. Hence, the order of $[(\sigma\circ\varepsilon_{1})\varepsilon_{2}^{-1}]$ is coprime with all elements of $\pi(G_{2})$. But, by \cite[Proposition 3.1.6]{Wei69}, we have $H^{2}(G_{2},G_{1})^{|G_{2}|}=1$, which implies that $[(\sigma\circ\varepsilon_{1})\varepsilon_{2}^{-1}]=1$ in $H^{2}(G_{2},G_{1})$, as required.
\end{proof}

\begin{defn}
Let $H$ be a subgroup of $G_{1}$. The perturbed direct products $G_{1}\underset{\varepsilon _{1}}{\times }G_{2}$ and $%
G_{1}\underset{\varepsilon _{2}}{\times }G_{2}$ are called
$(H,G_{2})$-isomorphic if there exists a $G_{2}$-isomorphism $\varphi=\left(
\begin{array}{cc}
\varphi _{11} & \varphi _{12} \\
\varphi _{21} & id_{G_{2}}%
\end{array}%
\right) $  between them such that $\varphi_{11}/H=id_{H}$.
\end{defn}

\begin{prop}
 Let $G_{2}$ be a group which acts trivially on an abelian group $G_{1}$. Let $H=Im(\varepsilon_{1})$. The perturbed direct products $G_{1}\underset{\varepsilon_{1}}{\times }G_{2}$ and $G_{1}\underset{\varepsilon_{2}}{\times }G_{2}$ are $(H,G_{2})$-isomorphic if and only if $\varepsilon_{1}\varepsilon_{2}^{-1}\in B^{2}(G_{2},G_{1})$.
\end{prop}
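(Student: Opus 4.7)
My plan is to prove both directions by direct computation using the formula from Lemma \ref{class1}, exploiting the hypothesis $\varphi_{11}|_{H} = id_{H}$ at exactly the right moment to eliminate the ``twist by an automorphism'' that appears in the more general $G_{2}$-isomorphism statement of Proposition \ref{centerless}.

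For the forward direction, I assume an $(H,G_{2})$-isomorphism $\varphi$ exists. The key is to apply the homomorphism condition $\varphi\bigl((1,y)\cdot_{\varepsilon_{1}}(1,y')\bigr)=\varphi(1,y)\cdot_{\varepsilon_{2}}\varphi(1,y')$ and unpack both sides via \eqref{Hom}. Since $\varphi_{11}(1)=\varphi_{21}(1)=1$ and $\varphi_{22}=id_{G_{2}}$, the right-hand side simplifies to $(\varphi_{12}(y)\varphi_{12}(y')\varepsilon_{2}(y,y'),\,yy')$, while the left-hand side becomes
\[
\bigl(\varphi_{11}(\varepsilon_{1}(y,y'))\,\varphi_{12}(yy')\,\varepsilon_{2}(\varphi_{21}(\varepsilon_{1}(y,y')),\,yy'),\ \varphi_{21}(\varepsilon_{1}(y,y'))\,yy'\bigr).
\]
Comparing second components forces $\varphi_{21}(\varepsilon_{1}(y,y'))=1$, so by normalization the $\varepsilon_{2}$-factor in the first component drops out. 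Now the crucial move: because $\varepsilon_{1}(y,y')\in H=\mathrm{Im}(\varepsilon_{1})$ and $\varphi_{11}|_{H}=id_{H}$, we may replace $\varphi_{11}(\varepsilon_{1}(y,y'))$ by $\varepsilon_{1}(y,y')$. Rearranging yields
\[
\varepsilon_{1}(y,y')\,\varepsilon_{2}(y,y')^{-1}=\varphi_{12}(y)\,\varphi_{12}(y')\,\varphi_{12}(yy')^{-1},
\]
which exhibits $\varepsilon_{1}\varepsilon_{2}^{-1}$ as the coboundary associated to $\eta:=\varphi_{12}$.

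For the converse, assume $\varepsilon_{1}\varepsilon_{2}^{-1}\in B^{2}(G_{2},G_{1})$ with witness $\eta:G_{2}\to G_{1}$, normalized so that $\eta(1)=1$. I define $\varphi:G_{1}\underset{\varepsilon_{1}}{\times}G_{2}\to G_{1}\underset{\varepsilon_{2}}{\times}G_{2}$ by $\varphi(x,y)=(x\eta(y),\,y)$. A routine check shows this is a bijection (inverse $(x,y)\mapsto(x\eta(y)^{-1},y)$), and the homomorphism identity reduces precisely to the coboundary equation satisfied by $\eta$. Reading off from the definition, $\varphi_{22}=id_{G_{2}}$ and $\varphi_{11}=id_{G_{1}}$ (in particular $\varphi_{11}|_{H}=id_{H}$), so $\varphi$ is the required $(H,G_{2})$-isomorphism.

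The only genuinely subtle point is the forward direction: without the assumption $\varphi_{11}|_{H}=id_{H}$, one would only conclude that $(\varphi_{11}\circ\varepsilon_{1})\varepsilon_{2}^{-1}$ lies in $B^{2}$ (as in Proposition \ref{centerless}), and one would additionally have to argue that $\varphi_{11}$ is a homomorphism on $H$---which is not automatic since $\varphi_{11}$ is only a set-theoretic map. Fixing $\varphi_{11}$ to be the identity on $H$ sidesteps both issues simultaneously, which is exactly why the clean statement $\varepsilon_{1}\varepsilon_{2}^{-1}\in B^{2}(G_{2},G_{1})$ emerges. Beyond that observation, the proof is a direct calculation.
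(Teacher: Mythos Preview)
Your proof is correct and follows essentially the same route as the paper: both directions proceed by expanding $\varphi\bigl((1,y)\cdot_{\varepsilon_{1}}(1,y')\bigr)=\varphi(1,y)\cdot_{\varepsilon_{2}}\varphi(1,y')$ via Lemma~\ref{class1}, reading off $\varphi_{21}(\varepsilon_{1}(y,y'))=1$ from the second component, invoking $\varphi_{11}|_{H}=id_{H}$ to pass from $(\varphi_{11}\circ\varepsilon_{1})\varepsilon_{2}^{-1}$ to $\varepsilon_{1}\varepsilon_{2}^{-1}$, and building the converse isomorphism as $(x,y)\mapsto(x\eta(y),y)$. The only cosmetic difference is that the paper first records $\varphi_{21}\in\mathrm{Hom}(G_{1},Z(G_{2}))$ before the computation, whereas you extract $\varphi_{21}(\varepsilon_{1}(y,y'))=1$ directly from the equation---but this does not change the argument.
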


\begin{proof}
 Indeed, if the perturbed direct products $G_{1}\underset{\varepsilon_{1}}{\times }G_{2}$ and $G_{1}\underset{\varepsilon_{2}}{\times }G_{2}$ are $(H,G_{2})$-isomorphic, then there exists an isomorphism
 $\varphi=\left(
     \begin{array}{cc}
     \varphi _{11} & \varphi _{12} \\
     \varphi _{21} & id_{G_{2}}%
     \end{array}%
     \right) $ such that $\varphi _{21}\in Hom(G_{1},Z(G_{2}))$ and $\varphi_{11}/H=id_{H}$. So, evaluate the left hand side and right hand side of the equality $\varphi (1,y) \underset{\varepsilon_{2}}{\bullet }\varphi ( 1,y')=\varphi (\varepsilon_{1}(y,y'),yy')$, we obtain
     \begin{enumerate}
       \item $\varphi _{21}(\varepsilon_{1}(y,y'))yy'=yy'$,
       \item $\varphi _{11}(\varepsilon_{1}(y,y'))\varphi _{12}(yy')\varepsilon_{2}(\varphi _{21}(\varepsilon_{1}(y,y')),yy')=\varphi _{12}(y) \varphi _{12}(y')\varepsilon_{2}(y,y')$.
     \end{enumerate}
     The first equality implies that $Im(\varepsilon_{1})\leq Ker(\varphi _{21})$. So the second equality gives us $$\varphi _{11}(\varepsilon_{1}(y,y'))\varepsilon_{2}(y,y')^{-1}=\varphi _{12}(y) \varphi _{12}(y')\varphi _{12}(yy')^{-1}.$$ Thus $(\varphi _{11}\circ\varepsilon_{1})\varepsilon_{2}^{-1}\in B^{2}(G_{2},G_{1})$ and therefore $\varepsilon_{1}\varepsilon_{2}^{-1}\in B^{2}(G_{2},G_{1})$ since $\varphi_{11}/H=id_{H}$. The proof of the converse is clear and similar to the proof of the converse of Proposition \ref{centerless} and then it is omitted.
\end{proof}

Using the preceding proposition, the proof of the following result is similar to the proof of Theorem \ref{centerthm} and then we omit the details.

\begin{thm}
 Let $G_{2}$ be a finite group which acts trivially on an abelian torsion group $G_{1}$. Let $H=Im(\varepsilon_{1})$ and $H_{i}=Im(\varepsilon_{1i})$ for all $1\leq i\leq k$. The perturbed direct products $G_{1}\underset{\varepsilon_{1}}{\times }G_{2}$ and $G_{1}\underset{\varepsilon_{2}}{\times}G_{2}$ are $(H,G_{2})$-isomorphic if and only if $G_{1p_{i}}\underset{\varepsilon_{1i}}{\times }G_{2i}$ and $G_{1p_{i}}\underset{\varepsilon_{2i}}{\times }G_{2i}$ are  $(H_{i},G_{2i})$-isomorphic for all $1\leq i\leq k$.
\end{thm}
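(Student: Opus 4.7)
My plan is to run the argument of Theorem~\ref{centerthm} verbatim, with the criterion from Proposition~\ref{centerless} replaced by the simpler one supplied by the preceding proposition, namely $\varepsilon_1\varepsilon_2^{-1}\in B^2(G_2,G_1)$. Because no automorphism of $G_1$ has to be assembled from its Sylow components, the bookkeeping is strictly lighter than in Theorem~\ref{centerthm}, while the underlying cohomological machinery is identical. The only subtlety is confirming that the preceding proposition can legitimately be applied inside each Sylow piece: one needs $H_i=Im(\varepsilon_{1i})$ to sit in $G_{1p_i}$, which is what the coefficient identification $H^2(G_{2i},G_1)\cong H^2(G_{2i},G_{1p_i})$ underwrites, once $\varepsilon_{1i}$ and $\varepsilon_{2i}$ are replaced by cohomologous $G_{1p_i}$-valued representatives.

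For the necessity, I would start from the preceding proposition to obtain $\varepsilon_1\varepsilon_2^{-1}\in B^2(G_2,G_1)$, restrict to $G_{2i}\times G_{2i}$ to get $\varepsilon_{1i}\varepsilon_{2i}^{-1}\in B^2(G_{2i},G_1)$, and then use the identification $H^2(G_{2i},G_1)\cong H^2(G_{2i},G_{1p_i})$ from \cite[Lemma 1.5]{K93} (valid because $G_{2i}$ is a $p_i$-group and $G_1$ is torsion) to relocate the coboundary inside $B^2(G_{2i},G_{1p_i})$. Applying the preceding proposition in the $p_i$-local setting then yields the required $(H_i,G_{2i})$-isomorphism.

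For sufficiency, I would reverse those steps. The local $(H_i,G_{2i})$-isomorphisms give, via the preceding proposition, $[\varepsilon_{1i}\varepsilon_{2i}^{-1}]=1$ in $H^2(G_{2i},G_{1p_i})$ and hence in $H^2(G_{2i},G_1)$. Pushing up via corestriction as in Theorem~\ref{centerthm}, and using $cores\circ res=[G_2:G_{2i}]$ from \cite[Corollary 2.4.9]{Wei69}, yields $[\varepsilon_1\varepsilon_2^{-1}]^{[G_2:G_{2i}]}=1$ for every $i$. Since each index $[G_2:G_{2i}]$ is coprime to $p_i$, the order of $[\varepsilon_1\varepsilon_2^{-1}]$ is coprime to every prime in $\pi(G_2)$; combined with $H^2(G_2,G_1)^{|G_2|}=1$ from \cite[Proposition 3.1.6]{Wei69}, this forces $[\varepsilon_1\varepsilon_2^{-1}]=1$, and the preceding proposition delivers the global $(H,G_2)$-isomorphism.

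I expect the only real obstacle to be a notational one: keeping track of whether $\varepsilon_{ji}$ is viewed as $G_1$-valued or $G_{1p_i}$-valued when quoting the preceding proposition. Once one commits to $G_{1p_i}$-valued representatives throughout, the remainder of the argument is exactly the corestriction--restriction manipulation that already drives Theorem~\ref{centerthm}, which is why the author feels entitled to omit the details.
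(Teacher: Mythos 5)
Your proposal is correct and follows exactly the route the paper intends: the paper omits this proof, stating only that it is "similar to the proof of Theorem \ref{centerthm}" with the preceding proposition's criterion $\varepsilon_{1}\varepsilon_{2}^{-1}\in B^{2}(G_{2},G_{1})$ in place of the one from Proposition \ref{centerless}, and your restriction/coefficient-identification argument for necessity and corestriction argument for sufficiency reproduce that template faithfully. Your attention to the $G_{1}$-valued versus $G_{1p_{i}}$-valued coefficient issue is the right subtlety to flag, and it is handled correctly.
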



\section{Isomorphisms leaving the kernel group invariant}


 We need the following definition.

\begin{defn}
Let $\varepsilon_{1}$, $\varepsilon_{2} \in
Z^{2}(G_{2},G_{1})$. The perturbed direct products $G_{1}\underset{\varepsilon_{1}}{\times }G_{2}$ and $G_{1}\underset{\varepsilon_{2}}{\times }G_{2}$ are called upper isomorphic if there exists an isomorphism $\varphi :G_{1}\underset{\varepsilon_{1}}{\times }G_{2}\longrightarrow
G_{1}\underset{\varepsilon_{2}}{\times }G_{2}$ leaving $G_{1}$ invariant.
\end{defn}

 The following lemma can be viewed as an immediate consequence of \cite[Theorem 3.7]{Sn20}.
 
\begin{lem}
\label{Main}The perturbed direct products $G_{1}\underset{\varepsilon_{1}}{\times }G_{2}$ and $G_{1}\underset{\varepsilon_{2}}{\times }G_{2}$ are isomorphic by an isomorphism $\varphi=\left(
\begin{array}{cc}
\varphi _{11} & \varphi _{12} \\
1 & \varphi _{22}%
\end{array}%
\right) $
if and only if $\varphi _{11} \in Aut(G_{1})$ and $\varphi _{22} \in Aut(G_{2})$ such that
$$(\varphi _{11}\circ\varepsilon_{1})(\varepsilon_{2}^{-1}\circ(\varphi _{22} \times \varphi _{22} )) \in B^{2}( G_{2},G_{1}).$$
\end{lem}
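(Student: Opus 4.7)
The plan is to specialize Lemma \ref{class1} to the case $\varphi_{21}=1$. With $\varphi_{21}$ trivial, the normalization of $\varepsilon_{2}$ gives $\varepsilon_{2}(\varphi_{21}(x),\varphi_{22}(y))=\varepsilon_{2}(1,\varphi_{22}(y))=1$, so formula \eqref{Hom} collapses to
\[
\varphi(x,y)=(\varphi_{11}(x)\varphi_{12}(y),\;\varphi_{22}(y)).
\]
I would then expand the homomorphism equation $\varphi((x,y)\cdot_{\varepsilon_{1}}(x',y'))=\varphi(x,y)\cdot_{\varepsilon_{2}}\varphi(x',y')$ and match components. The second component directly yields $\varphi_{22}(yy')=\varphi_{22}(y)\varphi_{22}(y')$, so $\varphi_{22}$ is a group homomorphism; specializing the first component to $y=y'=1$ (noting that $\varphi_{12}(1)=1$, forced by evaluating $\varphi$ at $(1,1)$) likewise forces $\varphi_{11}$ to be a homomorphism on $G_{1}$.

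Using commutativity of $G_{1}$ to reshuffle the remaining factors, the first-component equation reduces to
\[
\varphi_{11}(\varepsilon_{1}(y,y'))\,\varepsilon_{2}(\varphi_{22}(y),\varphi_{22}(y'))^{-1}=\varphi_{12}(y)\,\varphi_{12}(y')\,\varphi_{12}(yy')^{-1},
\]
which, read left-to-right, is exactly $(\varphi_{11}\circ\varepsilon_{1})(\varepsilon_{2}^{-1}\circ(\varphi_{22}\times\varphi_{22}))\in B^{2}(G_{2},G_{1})$, with $\varphi_{12}$ playing the role of the coboundary witness $\eta$. To promote $\varphi_{11}$ and $\varphi_{22}$ to automorphisms, I would note that $\varphi$ maps $G_{1}\times\{1\}$ into itself via $(x,1)\mapsto(\varphi_{11}(x),1)$; bijectivity of $\varphi$ forces $\varphi_{11}\in\mathrm{Aut}(G_{1})$, and the induced map on the quotient by $G_{1}\times\{1\}$, namely $\varphi_{22}$, is then bijective as well. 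The converse is a direct verification: given $\varphi_{11}\in\mathrm{Aut}(G_{1})$, $\varphi_{22}\in\mathrm{Aut}(G_{2})$ and any $\varphi_{12}:G_{2}\to G_{1}$ realizing the coboundary, define $\varphi(x,y):=(\varphi_{11}(x)\varphi_{12}(y),\varphi_{22}(y))$ and reverse the computation to see it is a homomorphism; bijectivity follows from that of $\varphi_{11}$ and $\varphi_{22}$.

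The main obstacle is essentially bookkeeping rather than a genuine difficulty: one must be careful to track which factor gets inverted and composed with what, so that the coboundary condition emerges in the specific form $(\varphi_{11}\circ\varepsilon_{1})(\varepsilon_{2}^{-1}\circ(\varphi_{22}\times\varphi_{22}))\in B^{2}(G_{2},G_{1})$ rather than in some rearranged variant. Invoking the normalization of $\varepsilon_{2}$ at the outset (to trivialize the correction term arising from $\varphi_{21}=1$) and the normalization of $\varphi_{12}$ at the appropriate moments is essential, both to justify the simplified form of $\varphi$ and to ensure that $\varphi_{12}$ is a bona fide coboundary datum in the sense of the definition given in Section 2.
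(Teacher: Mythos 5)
The paper does not actually prove this lemma: it records it as an immediate consequence of \cite[Theorem 3.7]{Sn20}, so your self-contained computation is a genuinely different (and more informative) route. Its computational core is correct: with $\varphi_{21}=1$ the normalization of $\varepsilon_{2}$ collapses formula \eqref{Hom} to $\varphi(x,y)=(\varphi_{11}(x)\varphi_{12}(y),\varphi_{22}(y))$, the second component shows $\varphi_{22}$ is a homomorphism, the specialization $y=y'=1$ (with $\varphi_{12}(1)=1$) shows $\varphi_{11}$ is one, and the first-component identity is exactly the stated coboundary condition with witness $\eta=\varphi_{12}$; the converse is the straightforward reverse verification you describe.

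The gap is in your promotion of $\varphi_{11}$ and $\varphi_{22}$ to automorphisms, and specifically in the order you attempt it. What is free is only this: injectivity of $\varphi$ gives injectivity of $\varphi_{11}$ (since $\varphi(x,1)=(\varphi_{11}(x),1)$), and surjectivity of $\varphi$ gives surjectivity of $\varphi_{22}$. Your claim that bijectivity of $\varphi$ already forces $\varphi_{11}\in Aut(G_{1})$ does not follow: a preimage $(x,y)$ of $(z,1)$ satisfies only $\varphi_{22}(y)=1$, and without first knowing that $\varphi_{22}$ is injective you cannot conclude $y=1$ and hence $z\in Im(\varphi_{11})$. Likewise, the ``induced map on the quotient by $G_{1}\times\{1\}$'' is only known to be surjective, because $\varphi$ maps $G_{1}\times\{1\}$ into, not necessarily onto, $G_{1}\times\{1\}$; its injectivity is equivalent to the surjectivity of $\varphi_{11}$ that you are trying to establish, so the argument is circular. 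For infinite groups the implication can genuinely fail: with $G_{1}=G_{2}=\bigoplus_{i\geq 1}\mathbb{Z}/2\mathbb{Z}$ and $\varepsilon_{1}=\varepsilon_{2}=1$, taking $\varphi_{11}$ the right shift, $\varphi_{22}$ the left shift and $\varphi_{12}$ the projection onto the first coordinate produces a bijective homomorphism of the required triangular shape in which $\varphi_{11}$ is not onto and $\varphi_{22}$ is not injective. The repair, under the finiteness hypotheses in force wherever the lemma is applied (e.g.\ $G_{2}$ finite, or Hopfian), is to argue in the opposite order: $\varphi_{22}$ is a surjective endomorphism of $G_{2}$, hence an automorphism; then every preimage $(x,y)$ of $(z,1)$ has $y=1$, so $\varphi_{11}$ is surjective and hence an automorphism. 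You should either invoke such a hypothesis explicitly or restructure this step accordingly.
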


With the notations and assumptions that preceded Theorem \ref{centerthm}, we have the following.

\begin{prop}
Let $G_{2}$ be a cyclic group which acts trivially on an abelian torsion group $G_{1}$. The perturbed direct products $G_{1}\underset{\varepsilon_{1}}{\times }G_{2}$ and $G_{1}\underset{\varepsilon_{2}}{\times }G_{2}$ are upper isomorphic if and only if $G_{1p_{i}}\underset{\varepsilon_{1i}}{\times }G_{2i}$ and $G_{1p_{i}}\underset{\varepsilon_{2i}}{\times}G_{2i}$ are upper isomorphic for all $1\leq i\leq k$.
\end{prop}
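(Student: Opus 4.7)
The plan is to adapt the argument of Theorem \ref{centerthm} to the upper isomorphism setting by swapping out Proposition \ref{centerless} for Lemma \ref{Main}. The cyclic hypothesis on $G_{2}$ plays exactly the role that centerlessness did there: it allows free passage between automorphisms of $G_{2}$ and automorphisms of the Sylow $p_{i}$-subgroups $G_{2i}$, because in a cyclic group each $G_{2i}$ is the unique (hence characteristic) subgroup of its order, and the whole group decomposes as $G_{2}=\prod_{i}G_{2i}$.

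For the forward direction, I start from an upper isomorphism and use Lemma \ref{Main} to produce $\varphi_{11}\in Aut(G_{1})$ and $\varphi_{22}\in Aut(G_{2})$ with $(\varphi_{11}\circ\varepsilon_{1})(\varepsilon_{2}^{-1}\circ(\varphi_{22}\times\varphi_{22}))\in B^{2}(G_{2},G_{1})$. Since $G_{1p_{i}}$ is characteristic in $G_{1}$ and $G_{2i}$ is characteristic in the cyclic group $G_{2}$, I can restrict to obtain $\sigma_{i}=\varphi_{11}|_{G_{1p_{i}}}\in Aut(G_{1p_{i}})$ and $\tau_{i}=\varphi_{22}|_{G_{2i}}\in Aut(G_{2i})$. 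Restricting the $2$-coboundary to $G_{2i}\times G_{2i}$ and using the identification $H^{2}(G_{2i},G_{1})=H^{2}(G_{2i},G_{1p_{i}})$ from \cite[Lemma 1.5]{K93}, exactly as in Theorem \ref{centerthm}, shows that $(\sigma_{i}\circ\varepsilon_{1i})(\varepsilon_{2i}^{-1}\circ(\tau_{i}\times\tau_{i}))\in B^{2}(G_{2i},G_{1p_{i}})$. Lemma \ref{Main} applied to each pair $(\sigma_{i},\tau_{i})$ yields the upper isomorphism at each $p_{i}$-level.

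For the converse, Lemma \ref{Main} gives, for every $i$, automorphisms $\sigma_{i}\in Aut(G_{1p_{i}})$ and $\tau_{i}\in Aut(G_{2i})$ with $(\sigma_{i}\circ\varepsilon_{1i})(\varepsilon_{2i}^{-1}\circ(\tau_{i}\times\tau_{i}))\in B^{2}(G_{2i},G_{1p_{i}})$. Using $G_{2}=\prod_{i}G_{2i}$ and $G_{1}=\bigoplus_{p}G_{1p}$ I glue these into $\tau\in Aut(G_{2})$ and $\sigma\in Aut(G_{1})$ (taking the identity on $G_{1p}$ for primes $p\notin\pi(G_{2})$). Setting $\alpha=(\sigma\circ\varepsilon_{1})(\varepsilon_{2}^{-1}\circ(\tau\times\tau))$, the task reduces to proving $[\alpha]=1$ in $H^{2}(G_{2},G_{1})$. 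By construction $res_{G_{2i}}([\alpha])=1$, so applying $cores_{G_{2i}}$ and \cite[Corollary 2.4.9]{Wei69} gives $[\alpha]^{|G_{2}:G_{2i}|}=1$ for each $i$; combined with $[\alpha]^{|G_{2}|}=1$ from \cite[Proposition 3.1.6]{Wei69} and the fact that $|G_{2}:G_{2i}|$ is coprime to $p_{i}$, the order of $[\alpha]$ must be $1$. Lemma \ref{Main} then delivers the desired upper isomorphism.

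The main delicacy, as already in Theorem \ref{centerthm}, is the careful passage between cocycles with values in $G_{1}$ and with values in $G_{1p_{i}}$ after restricting to $G_{2i}$: the cocycle $\varepsilon_{1i}$ nominally takes values in all of $G_{1}$, but its cohomology class concentrates on the $p_{i}$-component because $G_{2i}$ is a $p_{i}$-group, so $\sigma_{i}\circ\varepsilon_{1i}$ has to be interpreted via projection onto $G_{1p_{i}}$. Beyond this bookkeeping, the argument is a mechanical transcription of the centerless case, with the only real change being that Lemma \ref{Main} now produces a pair $(\sigma,\tau)$ rather than just $\sigma$, and the cyclicity of $G_{2}$ is what guarantees the $\tau$-half of this pair behaves well under restriction and assembly over Sylow subgroups.
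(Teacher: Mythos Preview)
Your argument is correct, but it takes a different route from the paper. The paper invokes \cite[Proposition~3.11]{Sn20}, which says that for \emph{cyclic} $G_{2}$ an upper isomorphism exists if and only if there is $\sigma\in Aut(G_{1})$ with $(\sigma\circ\varepsilon_{1})\varepsilon_{2}^{-1}\in B^{2}(G_{2},G_{1})$; the automorphism $\varphi_{22}$ of $G_{2}$ drops out entirely. After that reduction the situation is literally identical to Proposition~\ref{centerless}, so the proof of Theorem~\ref{centerthm} applies verbatim. You instead keep both $\varphi_{11}$ and $\varphi_{22}$ from Lemma~\ref{Main} and use cyclicity only to guarantee that each $G_{2i}$ is characteristic (so $\varphi_{22}$ restricts) and that $G_{2}=\prod_{i}G_{2i}$ (so the $\tau_{i}$ glue). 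This is precisely the mechanism the paper deploys later, in Theorem~4.4 and Proposition~\ref{nil1}, for general and nilpotent $G_{2}$. Your approach is more self-contained (it avoids the external citation) and immediately generalises to nilpotent quotients; the paper's approach is shorter here because the cited result absorbs the $\varphi_{22}$-bookkeeping up front.
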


\begin{proof}Indeed, by \cite[Proposition 3.11]{Sn20}, the groups $G_{1}\underset{\varepsilon_{1}}{\times }G_{2}$ and $G_{1}\underset{\varepsilon_{2}}{\times }G_{2}$ are upper isomorphic if and only if there exists $\sigma \in Aut(G_{1})$ such that
$(\sigma\circ\varepsilon_{1})\varepsilon_{2}^{-1} \in B^{2}( G_{2},G_{1})$. Thus, using the same arguments as those used in the proof
of Theorem \ref{centerthm}, we get the required result.
\end{proof}

If $G$ is a group and $g\in G$, we will write $\gamma_{g}$ for the inner automorphism determined by $g$, i.e. $\gamma_{g}$ maps an element $x$ to $gxg^{-1}$.

\begin{thm}
  Let $G_{2}$ be a group which acts trivially on an abelian torsion group $G_{1}$. If the perturbed direct products $G_{1}\underset{\varepsilon_{1}}{\times }G_{2}$ and  $G_{1}\underset{\varepsilon_{2}}{\times }G_{2}$ are upper isomorphic, then $G_{1p_{i}}\underset{\varepsilon_{1i}}{\times }G_{2i}$ and $G_{1p_{i}}\underset{\varepsilon_{2i}}{\times}G_{2i}$ are upper isomorphic for all $1\leq i\leq k$.
\end{thm}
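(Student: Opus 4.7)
The plan is to first translate the upper-isomorphism hypothesis into a cohomological identity via Lemma \ref{Main}, and then localize that identity at each Sylow subgroup of $G_{2}$. The hypothesis yields $\varphi_{11}\in Aut(G_{1})$ and $\varphi_{22}\in Aut(G_{2})$ such that $(\varphi_{11}\circ\varepsilon_{1})(\varepsilon_{2}^{-1}\circ(\varphi_{22}\times\varphi_{22}))\in B^{2}(G_{2},G_{1})$. For each $i$ I aim to produce matching data $\sigma_{i}\in Aut(G_{1p_{i}})$ and $\tau_{i}\in Aut(G_{2i})$ satisfying the analogous coboundary relation in $Z^{2}(G_{2i},G_{1p_{i}})$, and then apply Lemma \ref{Main} in the reverse direction to conclude upper isomorphism of the $p_{i}$-pieces.

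For the kernel side I set $\sigma_{i}=\varphi_{11}|_{G_{1p_{i}}}$, which is an automorphism of $G_{1p_{i}}$ because the $p_{i}$-component is a characteristic subgroup of the torsion group $G_{1}$. The quotient side is subtler: $\varphi_{22}$ maps $G_{2i}$ to some Sylow $p_{i}$-subgroup of $G_{2}$, not necessarily $G_{2i}$ itself. By Sylow's theorem I may choose $g_{i}\in G_{2}$ with $g_{i}\varphi_{22}(G_{2i})g_{i}^{-1}=G_{2i}$, and I set $\tau_{i}=\gamma_{g_{i}}\circ\varphi_{22}|_{G_{2i}}\in Aut(G_{2i})$, so that $\varphi_{22}|_{G_{2i}}=\gamma_{g_{i}^{-1}}\circ\tau_{i}$.

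The main obstacle is to show that inserting $\gamma_{g_{i}^{-1}}$ costs only a coboundary, since $g_{i}$ need not lie in $G_{2i}$. I will invoke the standard fact that any inner automorphism of $G_{2}$ acts trivially on $H^{2}(G_{2},G_{1})$: there exists $\lambda:G_{2}\to G_{1}$ with $\varepsilon_{2}(g_{i}^{-1}xg_{i},g_{i}^{-1}yg_{i})=\varepsilon_{2}(x,y)\,\delta\lambda(x,y)$ for all $x,y\in G_{2}$, which can be verified directly by lifting $g_{i}$ to $(1,g_{i})\in G_{1}\underset{\varepsilon_{2}}{\times}G_{2}$ and computing its conjugation action (the lift is central modulo $G_{1}$, and $G_{1}$ is central, so the outcome is an explicit coboundary). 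Evaluating at $x=\tau_{i}(y)$, $y'=\tau_{i}(y')\in G_{2i}$ and using that $\tau_{i}:G_{2i}\to G_{2i}$ is a homomorphism, one obtains $\varepsilon_{2}(\varphi_{22}(y),\varphi_{22}(y'))=\varepsilon_{2i}(\tau_{i}(y),\tau_{i}(y'))\cdot\delta(\lambda\circ\tau_{i})(y,y')$. Substituting this into the restriction of the hypothesis identity to $G_{2i}\times G_{2i}$ and absorbing the extra $\delta(\lambda\circ\tau_{i})$ into $\eta|_{G_{2i}}$ gives $(\varphi_{11}\circ\varepsilon_{1i})(\varepsilon_{2i}^{-1}\circ(\tau_{i}\times\tau_{i}))\in B^{2}(G_{2i},G_{1})$.

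To finish I project along $G_{1}\twoheadrightarrow G_{1p_{i}}$. Since $G_{1p_{i}}$ is characteristic this projection commutes with $\varphi_{11}$, converting $\varphi_{11}$ into $\sigma_{i}$ and sending $\varepsilon_{1i},\varepsilon_{2i}$ to their $p_{i}$-components, which via the isomorphism $H^{2}(G_{2i},G_{1})\cong H^{2}(G_{2i},G_{1p_{i}})$ of \cite[Lemma 1.5]{K93} (already exploited in the proof of Theorem \ref{centerthm}) represent the same cohomology classes. This yields $(\sigma_{i}\circ\varepsilon_{1i})(\varepsilon_{2i}^{-1}\circ(\tau_{i}\times\tau_{i}))\in B^{2}(G_{2i},G_{1p_{i}})$, so Lemma \ref{Main} supplies the required upper isomorphism $G_{1p_{i}}\underset{\varepsilon_{1i}}{\times}G_{2i}\cong G_{1p_{i}}\underset{\varepsilon_{2i}}{\times}G_{2i}$ for each $1\leq i\leq k$.
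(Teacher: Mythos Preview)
Your proof is correct and follows essentially the same route as the paper: both adjust $\varphi_{22}$ by an inner automorphism $\gamma_{g_i^{-1}}$ so that it stabilizes the chosen Sylow $p_i$-subgroup, and both observe that this adjustment changes the cocycle relation only by a coboundary. The sole cosmetic difference is that the paper realizes this by explicitly conjugating the whole isomorphism $\varphi$ by the lift $t_2(g_i)^{-1}$ inside $G_1\underset{\varepsilon_2}{\times}G_2$ and reapplying Lemma~\ref{Main}, whereas you first apply Lemma~\ref{Main} and then invoke the standard fact that inner automorphisms act trivially on $H^2(G_2,G_1)$; these are two phrasings of the same computation.
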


\begin{proof}Indeed, if the groups $G_{1}\underset{\varepsilon_{1}}{\times }G_{2}$ and $G_{1}\underset{\varepsilon_{2}}{\times }G_{2}$ are upper isomorphic, then by Lemma \ref{Main}, there exists an isomorphism  $\varphi=\left(
\begin{array}{cc}
\varphi _{11} & \varphi _{12} \\
1 & \varphi _{22}%
\end{array}%
\right) $ between them  such that $\varphi _{11} \in Aut(G_{1})$ and $\varphi _{22} \in Aut(G_{2})$. Since $G_{2i}\in Syl_{p_{i}}(G_{2})$, it follows that $\varphi _{22}(G_{2i})=g_{i}G_{2i}g_{i}^{-1}$ for some $g_{i}\in G_{2}$ and then $(\gamma_{g_{i}^{-1}}\circ\varphi_{22})(G_{2i})=G_{2i}$. Therefore $\rho_{i}=res_{G_{2i}}(\gamma_{g_{i}^{-1}}\circ\varphi _{22})\in Aut(G_{2i})$. Further, we have $\sigma_{i}=res_{G_{1p_{i}}}(\varphi_{11})\in Aut(G_{1p_{i}})$ since $G_{1}$ is torsion. Now, by a simple calculation, we get $\gamma_{t_{2}(g_{i})^{-1}}\circ\varphi=\left(
\begin{array}{cc}
\varphi _{11} & \varphi^{i}_{12} \\
1 & \varphi^{i}_{22}%
\end{array}%
\right) $, where $\varphi^{i}_{22}=\gamma_{g_{i}^{-1}}\circ\varphi_{22}$ and  $\varphi^{i}_{12}(y)=\varphi_{12}(y)\varepsilon_{2}(g_{i}^{-1},g_{i})^{-1}\varepsilon_{2}(g_{i}^{-1},\varphi_{22}(y))\varepsilon_{2}(g_{i}^{-1}\varphi_{22}(y),g_{i})$ for all $y\in G_{2}$. Hence, the groups $G_{1}\underset{\varepsilon_{1}}{\times }G_{2}$ and $G_{1}\underset{\varepsilon_{2}}{\times }G_{2}$ are lower isomorphic by the isomorphism $\varphi'=\gamma_{t_{2}(g_{i})^{-1}}\circ\varphi$. Therefore, by Lemma \ref{Main}, we have $(\varphi _{11}\circ\varepsilon_{1})(\varepsilon_{2}^{-1}\circ(\varphi^{i}_{22} \times \varphi^{i}_{22})) \in B^{2}(G_{2},G_{1})$. Thus, by applying the restriction map $res_{G_{2i}}$, we get $(\sigma_{i}\circ\varepsilon_{1i})(\varepsilon_{2i}^{-1}\circ(\rho_{i}\times \rho_{i}))\in B^{2}(G_{2i},G_{1p_{i}})$. This completes the proof.
\end{proof}

In particular, if $G_{2}$ is a finite nilpotent group, then the converse of the preceding result holds, as shown in the following proposition.

\begin{prop}\label{nil1}
  Let $G_{2}$ be a finite nilpotent group which acts trivially on an abelian torsion group $G_{1}$. Suppose that $G_{1p_{i}}\underset{\varepsilon_{1i}}{\times }G_{2i}$ and   $G_{1p_{i}}\underset{\varepsilon_{2i}}{\times }G_{2i}$ are upper isomorphic for all $1\leq i\leq k$. Then, the perturbed direct products $G_{1}\underset{\varepsilon_{1}}{\times }G_{2}$ and   $G_{1}\underset{\varepsilon_{2}}{\times }G_{2}$ are upper isomorphic.
\end{prop}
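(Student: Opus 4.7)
The plan is to mirror the strategy of Theorem \ref{centerthm} while exploiting that finite nilpotence of $G_{2}$ makes each Sylow subgroup a direct factor. First, I would unpack the hypothesis via Lemma \ref{Main}: for each $i$, the upper isomorphism between $G_{1p_{i}}\underset{\varepsilon_{1i}}{\times}G_{2i}$ and $G_{1p_{i}}\underset{\varepsilon_{2i}}{\times}G_{2i}$ provides $\sigma_{i}\in Aut(G_{1p_{i}})$ and $\rho_{i}\in Aut(G_{2i})$ with $(\sigma_{i}\circ\varepsilon_{1i})(\varepsilon_{2i}^{-1}\circ(\rho_{i}\times\rho_{i}))\in B^{2}(G_{2i},G_{1p_{i}})$.

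I would then glue these local data into global automorphisms. Since $G_{2}$ is finite nilpotent, $G_{2}=G_{21}\times\cdots\times G_{2k}$ as an internal direct product of its Sylow subgroups, so $\varphi_{22}:=\rho_{1}\times\cdots\times\rho_{k}$ is a well-defined element of $Aut(G_{2})$. Using the primary decomposition $G_{1}=\bigoplus_{p}G_{1p}$ of the abelian torsion group $G_{1}$, I would define $\varphi_{11}\in Aut(G_{1})$ to act as $\sigma_{i}$ on $G_{1p_{i}}$ for each $p_{i}\in\pi(G_{2})$ and as the identity on the remaining primary components. By construction $\varphi_{22}|_{G_{2i}}=\rho_{i}$ and $\varphi_{11}|_{G_{1p_{i}}}=\sigma_{i}$.

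By Lemma \ref{Main} it then suffices to show that $[\alpha]=1$ in $H^{2}(G_{2},G_{1})$, where $\alpha:=(\varphi_{11}\circ\varepsilon_{1})(\varepsilon_{2}^{-1}\circ(\varphi_{22}\times\varphi_{22}))$. The decomposition of $G_{1}$ yields $H^{2}(G_{2},G_{1})=\bigoplus_{p}H^{2}(G_{2},G_{1p})$, so I would check each primary component $[\alpha^{(p)}]=1$ separately. For $p\notin\pi(G_{2})$, multiplication by $|G_{2}|$ annihilates $H^{2}(G_{2},G_{1p})$ but acts bijectively on the $p$-primary group $G_{1p}$, forcing $H^{2}(G_{2},G_{1p})=0$. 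For $p=p_{i}$, because $\varphi_{22}$ stabilizes $G_{2i}$ the restriction $res_{G_{2i}}(\alpha^{(p_{i})})$ coincides with $(\sigma_{i}\circ\varepsilon_{1i})(\varepsilon_{2i}^{-1}\circ(\rho_{i}\times\rho_{i}))$, which is a coboundary by hypothesis. Applying $cores_{G_{2i}}$ and using $cores_{G_{2i}}\circ res_{G_{2i}}=|G_{2}:G_{2i}|\cdot id$, as in the proof of Theorem \ref{centerthm}, yields $|G_{2}:G_{2i}|\cdot[\alpha^{(p_{i})}]=1$; since $H^{2}(G_{2},G_{1p_{i}})$ is $p_{i}$-primary and $|G_{2}:G_{2i}|$ is coprime to $p_{i}$, this forces $[\alpha^{(p_{i})}]=1$.

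The main obstacle is precisely what nilpotence lets me sidestep: identifying the restriction of $\alpha$ with the cocycle given by the hypothesis. In the preceding theorem an inner-automorphism correction $\gamma_{t_{2}(g_{i})^{-1}}$ was needed to arrange $\varphi_{22}(G_{2i})=G_{2i}$; here nilpotence makes each $G_{2i}$ a characteristic direct factor, so the glued $\varphi_{22}$ stabilizes each $G_{2i}$ automatically and the restriction step reduces cleanly to the hypothesis without any conjugation adjustment.
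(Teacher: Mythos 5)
Your proposal is correct and follows essentially the same route as the paper: unpack the hypothesis via Lemma \ref{Main}, use nilpotence of $G_{2}$ to glue the $\rho_{i}$ into $\varphi_{22}\in Aut(G_{2})$ and the $\sigma_{i}$ into $\varphi_{11}\in Aut(G_{1})$, and then kill the resulting cohomology class by the restriction--corestriction argument already used in Theorem \ref{centerthm}. Your explicit primary decomposition of $H^{2}(G_{2},G_{1})$ and the remark that no inner-automorphism correction is needed are just slightly more detailed renderings of the same argument.
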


\begin{proof}
Indeed, if $G_{1p_{i}}\underset{\varepsilon_{1i}}{\times }G_{2i}$ and $G_{1p_{i}}\underset{\varepsilon_{2i}}{\times }G_{2i}$ are upper isomorphic, then by Lemma \ref{Main}, there exist $\sigma_{i} \in Aut(G_{1p_{i}})$, $\rho_{i} \in Aut(G_{2i})$ such that $(\sigma_{i}\circ\varepsilon_{1i})(\varepsilon_{2i}^{-1}\circ(\rho_{i}\times \rho_{i}))\in B^{2}(G_{2i},G_{1p_{i}})$. Since $G_{2}$ is nilpotent, it is the direct product of its Sylow subgroups and then $\rho=(\rho_{i})_{1\leq i\leq k}\in Aut(G_{2})$. Furthermore, we have $\sigma=(\sigma_{i})_{1\leq i\leq k}\in Aut(G_{1})$. Thus, we have $res_{G_{2i}}[(\sigma\circ\varepsilon_{1})(\varepsilon_{2}^{-1}\circ(\rho\times \rho))]=1$ in $H^{2}(G_{2i},G_{1p_{i}})$. The rest of the proof is similar to the second part of the proof of Theorem \ref{centerthm}, and so is omitted.
\end{proof}


\section{Isomorphisms inducing a commuting automorphism on the quotient group}


Let $G$ be a group. An automorphism $\rho$ of $G$ is called a commuting automorphism of $G$ if for each
$x\in G$, $\rho(x)$ commutes with $x$. The set of all commuting automorphisms of $G$ is denoted by $\mathcal{A}(G)$. The group $Aut_{c}(G)=C_{Aut(G)}(G/Z(G))$  of central automorphisms of $G$ is always a subset of $\mathcal{A}(G)$.

\begin{defn}
The perturbed direct products $G_{1}\underset{\varepsilon _{1}}{\times }G_{2}$ and $%
G_{1}\underset{\varepsilon _{2}}{\times }G_{2}$ are called
$\mathcal{A}$-isomorphic if there exists an isomorphism $\varphi=\left(
\begin{array}{cc}
\varphi _{11} & \varphi _{12} \\
\varphi _{21} & \varphi _{22}%
\end{array}%
\right) $  between them such that $\varphi _{22}\in\mathcal{A}(G_{2})$. In particular, if $\varphi _{22}\in Aut_{c}(G_{2})$, then the groups $G_{1}\underset{\varepsilon _{1}}{\times }G_{2}$ and $%
G_{1}\underset{\varepsilon _{2}}{\times }G_{2}$ are called
$c$-isomorphic.
\end{defn}

\begin{prop} Let $G_{2}$ be a centerless perfect group. Then, the perturbed direct products $G_{1}\underset{\varepsilon_{1}}{\times }G_{2}$ and $G_{1}\underset{\varepsilon_{2}}{\times }G_{2}$ are $\mathcal{A}$-isomorphic if and only if there exists $\sigma \in Aut(G_{1})$ such that $(\sigma\circ\varepsilon_{1})\varepsilon_{2}^{-1}\in B^{2}( G_{2},G_{1})$.
\end{prop}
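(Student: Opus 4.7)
The sufficiency is immediate: given $\sigma\in Aut(G_1)$ with $(\sigma\circ\varepsilon_1)\varepsilon_2^{-1}\in B^2(G_2,G_1)$, Proposition~\ref{centerless} (whose proof only needs $G_2$ centerless) produces a $G_2$-isomorphism, which is automatically an $\mathcal{A}$-isomorphism since $id_{G_2}\in\mathcal{A}(G_2)$.

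For the necessity, suppose $\varphi=\left(\begin{smallmatrix}\varphi_{11}&\varphi_{12}\\ \varphi_{21}&\varphi_{22}\end{smallmatrix}\right)$ is an $\mathcal{A}$-isomorphism, so in particular $\varphi_{22}\in\mathcal{A}(G_2)\subseteq Aut(G_2)$. I would run the opening move of the proof of Proposition~\ref{centerless} almost verbatim: the normalisation of $\varepsilon_1$ forces $(x,1)$ and $(1,y)$ to commute in $G_1\underset{\varepsilon_1}{\times}G_2$, and transporting this equality through $\varphi$, then comparing the second coordinates via formula~\eqref{Hom}, yields $\varphi_{21}(x)\varphi_{22}(y)=\varphi_{22}(y)\varphi_{21}(x)$ for all $x\in G_1$ and $y\in G_2$. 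Since $\varphi_{22}$ is onto, this forces $\varphi_{21}(x)\in Z(G_2)=1$, hence $\varphi_{21}=1$ and $\varphi$ is an upper isomorphism. Lemma~\ref{Main} then gives $\varphi_{11}\in Aut(G_1)$, $\varphi_{22}\in Aut(G_2)$, and
\[
(\varphi_{11}\circ\varepsilon_1)(\varepsilon_2^{-1}\circ(\varphi_{22}\times\varphi_{22}))\in B^2(G_2,G_1).
\]

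The main obstacle is to absorb the twist by $\varphi_{22}\times\varphi_{22}$; the cleanest route is to show $\varphi_{22}=id_{G_2}$, i.e., that every commuting automorphism of a centerless perfect group is trivial. I would attack this through the displacement map $\delta\colon G_2\to G_2$, $\delta(g)=\varphi_{22}(g)g^{-1}$: the commuting hypothesis $[g,\varphi_{22}(g)]=1$ places $\delta(g)\in C_{G_2}(g)$, while multiplicativity of $\varphi_{22}$ yields the crossed-homomorphism identity $\delta(gh)=\delta(g)\cdot g\delta(h)g^{-1}$ together with $\delta(g^{-1})=\delta(g)^{-1}$. Expanding $\delta([g,h])$ via these rules and repeatedly invoking $[\delta(g),g]=[\delta(h),h]=1$, the plan is to push the expression into $Z(G_2)=1$; then $\delta$ vanishes on every commutator, and perfectness $G_2=[G_2,G_2]$ propagates this to $\delta\equiv 1$, i.e.\ $\varphi_{22}=id_{G_2}$. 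With $\varphi_{22}=id_{G_2}$ the displayed cocycle identity collapses to $(\varphi_{11}\circ\varepsilon_1)\varepsilon_2^{-1}\in B^2(G_2,G_1)$, so $\sigma=\varphi_{11}$ is the required automorphism.
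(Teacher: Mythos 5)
Your overall architecture is sound, and it necessarily passes through the same key fact as the paper's (two-line) proof: a centerless perfect group has no non-trivial commuting automorphism, i.e.\ $\mathcal{A}(G_{2})=\{id_{G_{2}}\}$. The paper simply cites Laffey \cite{La86} for this and then applies Proposition \ref{centerless} directly. You instead first kill $\varphi_{21}$ (that part is correct: commutativity of $(x,1)$ and $(1,y)$ transports under $\varphi$ to $\varphi_{21}(x)$ commuting with the image of $\varphi_{22}$, which is all of $G_{2}$, so $\varphi_{21}(x)\in Z(G_{2})=1$), then invoke Lemma \ref{Main}, and then set out to prove Laffey's fact from scratch. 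The sufficiency direction and the reduction to showing $\varphi_{22}=id_{G_{2}}$ are fine.

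The gap is in the final step. The displacement map $\delta(g)=\varphi_{22}(g)g^{-1}$ is indeed a crossed homomorphism with $\delta(g)\in C_{G_{2}}(g)$, but expanding $\delta([g,h])$ and ``repeatedly invoking $[\delta(g),g]=[\delta(h),h]=1$'' cannot push the result into $Z(G_{2})$: those two relations say nothing about how $\delta(g)$ interacts with $h$ or with $\delta(h)$, and $[\delta(g)g,\delta(h)h]$ genuinely differs from $[g,h]$ without further input. The missing input is the commuting condition applied to \emph{products}: from $\delta(xy)\in C_{G_{2}}(xy)$ one extracts, after simplification using $[\delta(x),x]=[\delta(y),y]=1$, an identity of the form $[\varphi_{22}(x),y]=[x,\varphi_{22}(y)]$; comparing the two crossed-homomorphism expansions it yields, one finds that $\varphi_{22}(y)y^{-1}$ centralizes every commutator $[a,b]$ (with $a,b$ ranging over $G_{2}$, using surjectivity of $\varphi_{22}$), hence centralizes $G_{2}'=G_{2}$, hence lies in $Z(G_{2})=1$. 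Note that the correct shape of the conclusion is ``$\delta$ takes values in $C_{G_{2}}(G_{2}')$,'' not ``$\delta$ vanishes on commutators'' as a first step. Either carry out this computation in full or, as the paper does, cite \cite{La86}; as written, your last paragraph is a plan rather than a proof, and the specific expansion you propose does not close.
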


\begin{proof}
Since $G_{2}$ is a centerless perfect group, it follows that $\mathcal{A}(G_{2})=\{id_{G_{2}}\}$ \cite{La86}. Therefore, the result follows directly from Proposition \ref{centerless}.
\end{proof}

Let $G_{2}$ be a finite group such that $\pi(G_{2})=\{p_{1}, p_{2}, \ldots, p_{k}\}$. Let $G_{2i}$ be a Sylow $p_{i}$-subgroup of $G_{2}$ for each $1\leq i\leq k$. Then, we have the following result.

\begin{prop}
Suppose that all of the sylow subgroups of $G_{2}$ are of maximal class such that $\log_{p_{i}}|G_{2i}|\geq 4$ for all $1\leq i\leq k$. Then, the perturbed direct products $G_{1}\underset{\varepsilon_{1}}{\times }G_{2}$ and $G_{1}\underset{\varepsilon_{2}}{\times }G_{2}$ are upper $\mathcal{A}$-isomorphic if and only if they are upper $c$-isomorphic.
\end{prop}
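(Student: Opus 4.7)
The ``if'' direction is clear from the containment $Aut_c(G_2)\subseteq\mathcal{A}(G_2)$: any upper $c$-isomorphism is trivially an upper $\mathcal{A}$-isomorphism. All the work lies in the converse.

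My plan is to combine a Sylow-by-Sylow reduction with the following known structural fact from the literature on commuting automorphisms, which I would cite: if $P$ is a finite $p$-group of maximal class with $|P|=p^n$ and $n\geq 4$, then $\mathcal{A}(P)=Aut_c(P)$. Starting from an upper $\mathcal{A}$-isomorphism $\varphi$ with second-row entry $\varphi_{22}\in\mathcal{A}(G_2)$, I would first arrange that each Sylow subgroup $G_{2i}$ is $\varphi_{22}$-stable. This is automatic whenever $G_{2i}$ is normal in $G_2$, which is the natural context here since the hypothesis on the Sylow subgroups is essentially a nilpotency-flavoured one. On each stable $G_{2i}$ the restriction $\varphi_{22}|_{G_{2i}}$ is again a commuting automorphism, and the cited theorem upgrades it to a central automorphism of $G_{2i}$, so that $\varphi_{22}(x)x^{-1}\in Z(G_{2i})$ for every $x\in G_{2i}$.

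Assembling this local information across all $i\in\{1,\ldots,k\}$, and using the decomposition $Z(G_2)=\prod_i Z(G_{2i})$ together with the fact that every element of $G_2$ factors through the $G_{2i}$'s, yields $\varphi_{22}(x)x^{-1}\in Z(G_2)$ for every $x\in G_2$. Hence $\varphi_{22}\in Aut_c(G_2)$, and the very same $\varphi$ witnesses an upper $c$-isomorphism between the two perturbed direct products.

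The main obstacle I anticipate is precisely the Sylow-stabilisation step at the beginning. The natural adjustment used in the proof of the previous theorem of Section 4, namely composing with an inner automorphism $\gamma_{t_2(g_i)^{-1}}$ of $G_{1}\underset{\varepsilon_2}{\times}G_2$, does not in general preserve the property of being a commuting automorphism, since inner automorphisms of $G_2$ are not themselves commuting. One must therefore either invoke the normality of the Sylow subgroups (which is where the maximal-class hypothesis does the real work, by forcing $G_2$ to behave nilpotently on this problem), or choose the adjusting element inside a centraliser large enough that the commuting property is preserved. Handling this delicately is the technical heart of the argument; once it is in place, everything else follows from the cited equality $\mathcal{A}(P)=Aut_c(P)$ applied to each $G_{2i}$.
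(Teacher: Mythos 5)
Your skeleton matches the paper's: restrict to Sylow subgroups, apply the known equality $\mathcal{A}(P)=Aut_{c}(P)$ for maximal-class $p$-groups of order at least $p^{4}$ (Fouladi--Orfi, which is exactly what the paper cites), then globalize. But the two steps you yourself flag as delicate are where your argument has a genuine gap, because you resolve both by assuming, in effect, that $G_{2}$ is nilpotent --- and that is not a hypothesis of this proposition (it is the hypothesis of the \emph{next} one). The assumption here is on the internal structure of each Sylow subgroup $G_{2i}$, not on how it sits inside $G_{2}$: maximal-class Sylow subgroups need not be normal, so the claim that the hypothesis is ``nilpotency-flavoured'' and forces $G_{2}$ ``to behave nilpotently'' has no content. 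Likewise your assembly step uses $Z(G_{2})=\prod_{i}Z(G_{2i})$ and the claim that every element of $G_{2}$ factors through the $G_{2i}$'s; both are direct-product statements that fail for general finite groups.

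The missing ingredient, which the paper supplies by citation, is a fact of Deaconescu, Silberberg and Walls \cite{De02} (Remark 4.2(ii)): in an \emph{arbitrary} finite group, every commuting automorphism normalizes every Sylow subgroup. This kills the stabilisation obstacle outright --- no conjugation adjustment and no normality are needed; one gets $\varphi_{22}(G_{2i})=G_{2i}$ for free, hence $res_{G_{2i}}(\varphi_{22})\in\mathcal{A}(G_{2i})$, and the Fouladi--Orfi theorem upgrades this restriction to a central automorphism of $G_{2i}$. The globalization is then handled by the companion Remark 4.3 of the same paper, which recovers $\varphi_{22}\in Aut_{c}(G_{2})$ from the local conclusions without any direct-product decomposition of $G_{2}$. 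Your ``if'' direction and your use of the maximal-class classification are fine; what is missing is precisely these two facts about commuting automorphisms of general finite groups, and without them (or some substitute) the proof does not go through under the stated hypotheses.
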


\begin{proof}
Assume that $G_{1}\underset{\varepsilon_{1}}{\times }G_{2}$ and $G_{1}\underset{\varepsilon_{2}}{\times }G_{2}$ are $\mathcal{A}$-isomorphic by an isomorphism $\varphi=\left(
\begin{array}{cc}
\varphi _{11} & \varphi _{12} \\
1 & \varphi _{22}%
\end{array}%
\right)$. By \cite[Remark 4.2 (ii)]{De02}, each Sylow subgroup of $G_{2}$ is normalized by $\mathcal{A}(G_{2})$. Therefore, we have $Res_{G_{2i}}(\varphi _{22})\in\mathcal{A}(G_{2i})$ for all $1\leq i\leq k$. Using the assumptions and \cite[Theorem 3.4]{FO13}, we get $\mathcal{A}(G_{2i})=Aut_{c}(G_{2i})$ for all $1\leq i\leq k$. Hence, by \cite[Remark 4.3]{De02}, we have  $\varphi_{22}\in Aut_{c}(G_{2})$. Thus, $G_{1}\underset{\varepsilon_{1}}{\times }G_{2}$ and $G_{1}\underset{\varepsilon_{2}}{\times }G_{2}$ are upper $c$-isomorphic. The other direction is clear since $Aut_{c}(G_{2})$ is a subset of $\mathcal{A}(G_{2})$.
\end{proof}

With the notations that preceded Theorem \ref{centerthm}, we obtain the following proposition.

\begin{prop}
Let $G_{2}$ be a finite nilpotent group which acts trivially on an abelian torsion group $G_{1}$. If all the sylow subgroups of $G_{2}$ are of coclass at most two, then the perturbed direct products $G_{1}\underset{\varepsilon_{1}}{\times }G_{2}$ and $G_{1}\underset{\varepsilon_{2}}{\times }G_{2}$ are upper $\mathcal{A}$-isomorphic if and only if $G_{1p_{i}}\underset{\varepsilon_{1i}}{\times }G_{2i}$ and $G_{1p_{i}}\underset{\varepsilon_{2i}}{\times}G_{2i}$ are upper $\mathcal{A}$-isomorphic for all $1\leq i\leq k$.
\end{prop}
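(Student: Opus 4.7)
The plan is to follow the blueprint of Proposition \ref{nil1}, combining Lemma \ref{Main} with the decomposition $G_2 = G_{21} \times \cdots \times G_{2k}$ into Sylow subgroups afforded by the nilpotence of $G_2$. The coclass $\leq 2$ hypothesis enters through the known structural result that for a $p$-group $P$ of coclass at most two one has $\mathcal{A}(P) = Aut_c(P)$; in particular $\mathcal{A}(G_{2i})$ is then a genuine subgroup of $Aut(G_{2i})$, and restriction/assembly of commuting automorphisms across the Sylow factors is well behaved.

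For the forward direction, I would start from an upper $\mathcal{A}$-isomorphism $\varphi = \left(\begin{array}{cc} \varphi_{11} & \varphi_{12} \\ 1 & \varphi_{22} \end{array}\right)$ with $\varphi_{22} \in \mathcal{A}(G_2)$. Since $G_2$ is nilpotent each $G_{2i}$ is characteristic in $G_2$, so $\rho_i := \varphi_{22}|_{G_{2i}} \in Aut(G_{2i})$; the commuting property of $\varphi_{22}$ restricts directly, giving $\rho_i \in \mathcal{A}(G_{2i})$. Likewise $\sigma_i := \varphi_{11}|_{G_{1p_i}} \in Aut(G_{1p_i})$ because $G_1$ is a torsion abelian group and every $\sigma \in Aut(G_1)$ preserves each $p$-component. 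Applying Lemma \ref{Main} to $\varphi$ and then restricting the resulting coboundary identity to $G_{2i} \times G_{2i}$ yields $(\sigma_i \circ \varepsilon_{1i})(\varepsilon_{2i}^{-1} \circ (\rho_i \times \rho_i)) \in B^2(G_{2i}, G_{1p_i})$, so that Lemma \ref{Main} produces the desired upper $\mathcal{A}$-isomorphism at each prime.

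For the converse, given for each $i$ data $\sigma_i \in Aut(G_{1p_i})$ and $\rho_i \in \mathcal{A}(G_{2i})$ satisfying the local coboundary condition, I would assemble $\sigma = (\sigma_i)_{1\leq i\leq k} \in Aut(G_1)$ and $\rho = (\rho_i)_{1\leq i\leq k} \in Aut(G_2)$ componentwise via the direct-product decompositions. Because elements from distinct Sylow subgroups of $G_2$ commute, the individual commuting property of each $\rho_i$ on $G_{2i}$ lifts to $\rho \in \mathcal{A}(G_2)$. Proceeding exactly as in the second half of the proof of Theorem \ref{centerthm}, the cohomology class of $(\sigma \circ \varepsilon_1)(\varepsilon_2^{-1} \circ (\rho \times \rho))$ restricts trivially to each Sylow subgroup; the corestriction maps together with \cite[Corollary 2.4.9]{Wei69} force its order to be coprime to every prime in $\pi(G_2)$, while the bound $H^2(G_2, G_1)^{|G_2|} = 1$ from \cite[Proposition 3.1.6]{Wei69} then compels it to be trivial. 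Lemma \ref{Main} finally furnishes the required global upper $\mathcal{A}$-isomorphism.

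The main obstacle I foresee is book-keeping the precise role of the coclass $\leq 2$ hypothesis, namely invoking the identification $\mathcal{A}(G_{2i}) = Aut_c(G_{2i})$ so that restriction and componentwise assembly of commuting automorphisms are both legitimate operations; once this is in hand the rest is a clean transcription of the cohomological reduction already established in Theorem \ref{centerthm} and Proposition \ref{nil1}, and can reasonably be omitted in the written proof.
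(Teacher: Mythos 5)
Your proposal is correct and follows essentially the same two-step scheme as the paper: restrict the global coboundary identity of Lemma \ref{Main} to each characteristic Sylow subgroup for the forward direction, and for the converse assemble $\sigma=(\sigma_i)$ and $\rho=(\rho_i)$ componentwise and kill the class $[(\sigma\circ\varepsilon_1)(\varepsilon_2^{-1}\circ(\rho\times\rho))]$ by the restriction--corestriction argument of Theorem \ref{centerthm} (the paper packages this last step as a citation of Proposition \ref{nil1} rather than redoing it). The one place you diverge is in accounting for the coclass hypothesis: the paper does not use $\mathcal{A}(P)=Aut_c(P)$ here, but rather cites \cite[Corollary 3.4]{AAV19} for the identification $\mathcal{A}(G_2)\cong\prod_i\mathcal{A}(G_{2i})$, whereas you verify directly that a tuple of commuting automorphisms of the Sylow factors gives a commuting automorphism of their direct product (and, for the forward direction, that a commuting automorphism restricts to one on each characteristic Sylow subgroup). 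Your elementary verification is sound -- in a direct product two elements commute iff they commute componentwise -- and it has the side effect of showing that the coclass assumption is not actually needed for the set-level containments your argument uses; so your worry about ``legitimizing'' restriction and assembly via $\mathcal{A}=Aut_c$ is unfounded, and the hypothesis only matters if one insists on quoting the group isomorphism from \cite{AAV19} as the paper does.
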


\begin{proof}
Indeed, by Lemma \ref{Main}, if the groups $G_{1p_{i}}\underset{\varepsilon_{1i}}{\times }G_{2i}$ and $G_{1p_{i}}\underset{\varepsilon_{2i}}{\times }G_{2i}$ are upper $\mathcal{A}$-isomorphic, then there exist $\sigma \in Aut(G_{1})$, $\rho \in \mathcal{A}(G_{2})$ such that $(\sigma\circ\varepsilon_{1})(\varepsilon_{2}^{-1}\circ(\rho\times \rho))\in B^{2}(G_{2},G_{1})$. Clearly, we have
$\rho_{i}=res_{G_{2i}}(\rho)\in \mathcal{A}(G_{2i})$ and $\sigma_{i}=res_{G_{1p_{i}}}(\sigma)\in Aut(G_{1p_{i}})$ for all $1\leq i\leq k$. Hence, $res_{G_{2i}}((\sigma\circ\varepsilon_{1})(\varepsilon_{2}^{-1}\circ(\rho\times \rho)))=(\sigma_{i}\circ\varepsilon_{1i})(\varepsilon_{2i}^{-1}\circ(\rho_{i}\times \rho_{i}))\in B^{2}(G_{2i},G_{1p_{i}})$. This proves the only if direction. For the converse, suppose that the groups $G_{1p_{i}}\underset{\varepsilon_{1i}}{\times }G_{2i}$ and $G_{1p_{i}}\underset{\varepsilon_{2i}}{\times}G_{2i}$ are upper $\mathcal{A}$-isomorphic for all $1\leq i\leq k$. By Proposition \ref{nil1}, the groups $G_{1}\underset{\varepsilon_{1}}{\times }G_{2}$ and $G_{1}\underset{\varepsilon_{2}}{\times }G_{2}$ are isomorphic by an isomorphism $\varphi=\left(
\begin{array}{cc}
\sigma & \eta \\
1 & \rho%
\end{array}%
\right) $ such that $\rho=(\rho_{i})_{1\leq i\leq k}\in Aut(G_{2})$ where $\rho_{i}\in\mathcal{A}(G_{2i})$. By \cite[Corollary 3.4]{AAV19}, we have $\mathcal{A}(G_{2})\cong\prod_{i=1}^{k}\mathcal{A}(G_{2i})$. Thus $\rho\in \mathcal{A}(G_{2})$ and then $G_{1}\underset{\varepsilon_{1}}{\times }G_{2}$ and $G_{1}\underset{\varepsilon_{2}}{\times }G_{2}$ are upper $\mathcal{A}$-isomorphic.
\end{proof}



\end{document}